\let\oldsquare\square 
\renewcommand{\square}{\oldsquare}
\newcommand{\la}{\lambda}
\numberwithin{equation}{section}
\newtheorem{theorem}{Theorem}[section]
\newtheorem{lemma}[theorem]{Lemma}
\theoremstyle{definition}
\newtheorem{remark}[theorem]{Remark}
\let\originalleft\left
\let\originalright\right
\renewcommand{\left}{\mathopen{}\mathclose\bgroup\originalleft}
\renewcommand{\right}{\aftergroup\egroup\originalright}
\newcommand{\vertiii}{\vert\kern-0.3ex\vert\kern-0.25ex\vert}
\newcommand*{\N}{\ensuremath{\mathbb{N}}}
\newcommand*{\R}{\ensuremath{\mathbb{R}}}
\newcommand*{\Zd}{\ensuremath{\mathbb{Z}^d}}
\newcommand*{\Rd}{\ensuremath{\mathbb{R}^d}}
\DeclareSymbolFont{boldoperators}{OT1}{cmr}{bx}{n}
\renewcommand{\a}{\mathbf{a}}
\newcommand{\ahom}{\bar{\a}}
\newcommand{\cu}{\square}
\renewcommand{\P}{\mathbb{P}}
\newcommand{\E}{\mathbb{E}}
\newcommand{\X}{\mathcal{X}}
\renewcommand{\O}{\mathcal{O}}
\newcommand{\uhom}{\overline{u}}
\newcommand{\indc}{1}
\DeclareMathOperator{\dist}{dist}
\DeclareMathOperator{\size}{sl}
\newcommand{\e}{\varepsilon}
\newcommand{\avsum}{\mathop{\mathpalette\avsuminner\relax}\displaylimits}
\newcommand\avsuminner[2]{%
	{\sbox0{$\m@th#1\sum$}%
		\vphantom{\usebox0}%
		\ooalign{%
			\hidewidth
			\smash{\,\rule[.23em]{8.8pt}{1.1pt} \relax}%
			\hidewidth\cr
			$\m@th#1\sum$\cr
		}%
	}%
}
\newcommand\avsuminnerr[2]{%
	{\sbox0{$\m@th#1\sum$}%
		\vphantom{\usebox0}%
		\ooalign{%
			\hidewidth
			\smash{\,\rule[.23em]{6pt}{0.7pt} \relax}%
			\hidewidth\cr
			$\m@th#1\sum$\cr
		}%
	}%
}
\def\Xint#1{\mathchoice
	{\XXint\displaystyle\textstyle{#1}}%
	{\XXint\textstyle\scriptstyle{#1}}%
	{\XXint\scriptstyle\scriptscriptstyle{#1}}%
	{\XXint\scriptscriptstyle\scriptscriptstyle{#1}}%
	\!\int}
\def\XXint#1#2#3{{\setbox0=\hbox{$#1{#2#3}{\int}$}
		\vcenter{\hbox{$#2#3$}}\kern-.5\wd0}}
\def\fint{\Xint-}
\newcommand{\negphantom}{\v@true\h@true\negph@nt} 
\newcommand{\neghphantom}{\v@false\h@true\negph@nt} 
\newcommand{\negph@nt}{\ifmmode\expandafter\mathpalette 
	\expandafter\mathnegph@nt\else\expandafter\makenegph@nt\fi} 
\newcommand{\makenegph@nt}[1]{%
	\setbox\z@\hbox{\color@begingroup#1\color@endgroup}\finnegph@nt} 
\newcommand{\finnegph@nt}{%
	\setbox\tw@\null 
	\ifv@ \ht\tw@\ht\z@\dp\tw@\dp\z@\fi \ifh@\wd\tw@-\wd\z@\fi\box\tw@} 
\newcommand{\mathnegph@nt}[2]{%
	\setbox\z@\hbox{$\m@th #1{#2}$}\finnegph@nt} 
\newcommand{\addperiod}[1]{#1.}
\titleformat*{\subsection}{\normalfont\large}
\titleformat{\subsubsection}[runin]
{\bfseries}
{\thesubsubsection.}
{0.5em}
{\addperiod}
\titleformat*{\subsubsection}{\bfseries}
\titleformat*{\paragraph}{\bfseries}
\titleformat*{\subparagraph}{\large\bfseries}
\title{\bf \Large Optimal convergence rates for the spectrum of the graph {L}aplacian on {P}oisson point clouds}
\author{
		Scott Armstrong
	\thanks{Courant Institute of Mathematical Sciences, New York University.
		{\footnotesize \href{mailto:scotta@cims.nyu.edu}{scotta@cims.nyu.edu}.}
	}
\and 
	Raghavendra Venkatraman
	\thanks{Courant Institute of Mathematical Sciences, New York University.
		{\footnotesize \href{mailto:raghav@cims.nyu.edu}{raghav@cims.nyu.edu}.}
	}
}
\date{\today} 
\begin{document}
	
	\maketitle
	
	\begin{abstract}
		We prove optimal convergence rates for eigenvalues and eigenvectors of the graph Laplacian on Poisson point clouds. Our results are valid down to the critical percolation threshold, yielding error estimates for relatively sparse graphs. 
	\end{abstract}
	
	\setcounter{tocdepth}{2}  
	
	\section{Introduction}
	\label{s.intro}
	This paper establishes optimal convergence rates for the eigenvalues and eigenvectors of the graph Laplacian towards their continuum counterpart, for a random geometric graph obtained from connecting points that are within unit distance of each other, in an instance of a Poisson point process on~$\Rd, d \geqslant 2$, whose intensity~$\alpha $ is greater than the critical percolation intensity~$\alpha_c(d)$. Our convergence rates on the eigenvalues, and on eigenvectors (in norms that correspond to~$L^2, L^\infty,$ and in~$C^{0,1}$) are optimal (since they match with results in continuum homogenization theory), and are valid down to the percolation threshold (as we explain below). 
	
	\smallskip	
	Convergence rates for the spectrum of the graph Laplacian on random geometric graphs, towards its continuum counterparts, are of interest in diverse applications (see \cite{GGHS,CGL,CGT} and references therein, for applications inspired from machine learning, \cite{BG,ABV}  for a class of mesh-free methods for solving elliptic problems on manifolds, \cite{BS} for applications to consistency of topological data analysis). The thread connecting these papers, and ours, to questions from data science is this: the   effectiveness of data-driven methods for statistical inference problems such as classification is often tied to the so-called \emph{manifold hypothesis}: the emperical belief that real data from application areas are scattered around a low-dimensional \emph{unknown} structure (such as a manifold) embedded in a possibly high dimensional space (see \cite{FMN} for a mathematical understanding of this hypothesis). A fundamental idea underlying this nonlinear dimensionality reduction step is a weighted graph construction on the available point cloud: it serves as a discrete (generally noisy) approximation to the unknown low-dimensional manifold. The spectrum of the resulting graph Laplacian then plays a crucial role in various statistical inference algorithms (such as classification and regression). 
	
	\smallskip
	The question of obtaining rates of convergence for eigenvalues and eigenvectors that are \emph{valid for relatively sparse graphs} has attracted a lot of attention in recent years: see  \cite{BN,Singer,BIK,shi2015convergence,GGHS,Dobson,WR,CGT,CGL,Lu}. Below we will place our results in the context of these papers in detail. Strengthening these results to their optimal form, by using homogenization instead of averaging, is the main contribution of this paper. 
	
\smallskip
Our results are analytic consequences of the quantitative homogenization and large-scale regularity theory for Poisson point clouds developed in our previous paper~\cite{AV2}. That article adapts the quantitative homogenization by ``coarse-graining'' program of \cite{AKM19,AK22} to study the large scale behavior of Poisson point clouds, obtaining sharp estimates on the difference between graph-harmonic functions and harmonic functions. 

\smallskip 

To describe the problem setting informally, let~$\eta$ denote a Poisson point process of intensity~$\alpha $ on~$\Rd$ and construct a random geometric graph by joining points in~$\eta$ that are within unit distance of each other by an edge. It is well known that there exists~$\alpha_c(d) \in (0,\infty)$ such that if~$\alpha > \alpha_c(d)$,\footnote{We largely stick to the notation of our companion paper~\cite{AV2}, with one exception: since we prefer to reserve notations involving~$\la$ for eigenvalues, we denote the intensity of the process in this paper by~$\alpha$ instead of~$\la$ as in~\cite{AV2}.} then the resulting graph has a unique unbounded connected component, denoted in the sequel by~$\eta_*.$ Fixing a convex, or~$C^{1,1}$ domain~$U_0 \subset \Rd,$ let~$\eta_*(U_0)$ denote ``the largest well-connected component of~$\eta_*$ in~$U_0$'' (a precise definition appears below); then, the \emph{graph Laplacian}~$\mathcal{L}u$ of a function~$u:\eta_*(U_0)\to \R$ is given by 
	\begin{equation} \label{e.graphlap}
		\mathcal{L} u (x) := \sum_{y \in \eta_*(U_0)} \a(|y-x|) (u(y) - u(x))\, \ \ x \in \eta_*(U_0)\,.
	\end{equation} 
Here, and elsewhere, for concreteness
\begin{equation*}
	\a(t) := \begin{cases}
		1 & t < 1 \,, \\
		0 & t \geq 0 \,.
	\end{cases}
\end{equation*}
The exact form of~$\a$ is unimportant for our purposes, it can be substantially more general, and we refer to~\cite{AV2} for further discussion on this point.
For any integer~$m$ we set 
\[ U_m := 3^m U_0\,,\]
Given~$f: \eta_*(U_m) \to \R,$ in~\cite{AV2} we consider the problem
\begin{equation} \label{e.GDP}
	\begin{cases}
    -\mathcal{L} u (x) & = f(x)\ \ x \in \eta_*(U_m) \cap \{\mathrm{dist}(x,\partial U_m) \leq 2\} \\
    u(x) &= 0 \ \ x \in  \eta_*(U_m) \cap \{\mathrm{dist}(x,\partial U_m) > 2\}\,,
	\end{cases}
\end{equation}
and show on large scales (that is, for~$m \gg 1$) the solution to~\eqref{e.GDP} are quantitatively approximated by solutions to a \emph{deterministic} constant coefficient PDE on the continuum.
\begin{theorem}[{\cite[Theorem 1.3]{AV2}}]
	\label{t.AV2-main}
	Suppose~$\alpha > \alpha_c(d)$.
	There exists a positive, constant, deterministic matrix~$\ahom$ depending only on~$\alpha$, constants~$C(d,\alpha),s(d) > 0$ 
	and a nonnegative random variable~$\X$ satisfying 
	\begin{equation*}
	\P[ \X \geqslant C t] \leqslant 2 \exp (-t^s)\,, \ \forall t  > 0\,,
	\end{equation*}
	such that, for every~$m \in \N$ with~$3^m > \X$ and every function~$\uhom \in H^2(U_m) \cap H^1_0(U_m)$, the Dirichlet problem 
	\begin{equation} \label{e.Lu=f}
		\left\{
		\begin{aligned}
			& -  \mathcal{L} u = - \nabla \cdot \ahom \nabla \uhom & \mbox{on} & \ \eta_* ( U_m) \cap \{\mathrm{dist}(x,\partial U_m) > 2\}\,,
			\\ &
			u = 0 & \mbox{on} & \ \eta_*(U_m) \cap \{ x \in U_m : \dist(x,U_m) \leqslant 2 \} \,,
		\end{aligned}
		\right.
	\end{equation}
	for the graph Laplacian has a unique solution~$u$ which satisfies, for~$C(d,\alpha,U_0)<\infty$ which depends also on~$U_0$, and we have the estimate
	\begin{equation} \label{e.rateofconv}
		\| u - \uhom \|_{{L}^2(\eta_*(U_m))} 
		\leq C\| \uhom \|_{{L}^2(U_m)} \cdot 
		\begin{cases}
			3^{-m} 
			& \mbox{ if } d \geqslant 3\,, \\
			m^{\sfrac12} 3^{-m}  & \mbox{ if } d = 2 \,.
	\end{cases}	\end{equation}
\end{theorem}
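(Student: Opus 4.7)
The plan is to follow the coarse-graining homogenization framework of \cite{AKM19,AK22}, adapted to the percolation cluster. The overall strategy has three stages: (i) construct the homogenized matrix~$\ahom$ together with correctors via a subadditive variational argument on dyadic cubes~$3^n \cu$; (ii) transfer the quantitative information about the subadditive quantities into a large-scale~$C^{0,1}$-type regularity theory on~$\eta_*$, which in particular furnishes a random minimal scale~$\X$ with stretched-exponential tails; and (iii) execute a two-scale expansion on the scale~$3^m$ to compare~$u$ with~$\uhom$.

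For stage (i), I would introduce the dual pair of variance quantities on each cube~$\cu_n := 3^n \cu_0$. Namely, set
\begin{equation*}
  \nu(\cu_n,p) := \inf \Bigl\{ \tfrac{1}{|\cu_n|} \tfrac12 \sum_{x \sim y} (v(y)-v(x))^2 \,-\, p \cdot \ell_p \,:\, v - \ell_p \in H^1_0(\eta_*(\cu_n)) \Bigr\} \,,
\end{equation*}
with~$\ell_p(x) := p\cdot x$, and the dual quantity~$\nu^*(\cu_n,q)$ obtained by Legendre transform in the flux. By a subadditivity/superadditivity argument (using stationarity and ergodicity of the Poisson process restricted to the infinite cluster), both quantities converge to quadratic forms as~$n\to\infty$, and a Caccioppoli-type inequality on~$\eta_*$ identifies the limit as a single matrix~$\ahom$ defining~$\tfrac12 p\cdot\ahom^{-1}p$ and~$\tfrac12 q\cdot\ahom q$ respectively. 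Iterating the so-called ``quantity~$J$'' argument, which measures the discrepancy between~$\nu$ and~$\nu^*$, yields an algebraic rate of convergence on each scale. This is the step where the geometry of the supercritical cluster enters: one needs deterministic geometric estimates on good cubes (chemical-distance bounds, connectivity of crossing clusters, Poincar\'e inequalities on the cluster with constants uniform down to the percolation threshold). These good geometric events fail only on cubes smaller than the minimal scale~$\X$, and their probability of failure is controlled by renormalization, producing the stated tail bound.

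With~$\ahom$ and the associated first-order correctors~$\phi_e$ (defined on the whole cluster, with sublinear growth quantified by~$\X$) in hand, stage (iii) is the two-scale expansion. I would define the two-scale approximation
\begin{equation*}
   w(x) := \uhom(x) + \sum_{k=1}^d \phi_{e_k}(x) \, \partial_k \uhom(x) \,,
\end{equation*}
pointwise on~$\eta_*(U_m)$ (with~$\uhom$ and its derivatives interpreted via a suitable smoothing/extension on the continuum), and compute~$\mathcal{L} w - \mathcal{L} u$. The error splits into an interior term, which after discrete summation by parts is of the form~$\nabla \phi \cdot \nabla^2 \uhom$ plus flux corrector contributions, and a boundary term supported in a~$O(1)$-thick neighborhood of~$\partial U_m$. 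The interior term is estimated using the sublinearity of correctors and flux correctors together with the~$H^2$ bound on~$\uhom$, yielding the~$3^{-m}$ rate; the logarithm in~$d=2$ comes from the well-known logarithmically divergent corrector variance in the plane. An energy estimate (graph Poincar\'e inequality on~$\eta_*(U_m)$ with constant~$\lesssim 3^m$, valid once~$3^m > \X$) then converts the estimate on~$\mathcal{L}(u-w)$ into an~$L^2$ bound, and subtracting off the corrector term yields~\eqref{e.rateofconv}.

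The main obstacle is stage (i) combined with the boundary layer in stage (iii). The supercritical cluster is not uniformly elliptic in any discrete sense, and classical homogenization tools (Meyers estimates, deterministic Caccioppoli) must be replaced by coarse-grained versions that hold only above the scale~$\X$. Propagating the optimal~$3^{-m}$ rate through the coarse-graining iteration while keeping~$\X$ with stretched-exponential tails, uniformly down to the percolation threshold, is the technically most delicate part. The boundary layer is a secondary but nontrivial difficulty because the set~$\eta_*(U_m)$ need not fill~$U_m$ near~$\partial U_m$: one must use the large-scale regularity to show that the boundary data~$u=0$ can be matched by the two-scale expansion up to an error of the same order as the interior estimate, which is where the~$\{\dist(x,\partial U_m) \leq 2\}$ buffer in~\eqref{e.GDP} is used.
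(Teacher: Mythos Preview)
This theorem is not proved in the present paper: it is quoted verbatim from the companion article~\cite{AV2} (as indicated by the attribution ``\cite[Theorem 1.3]{AV2}'' in its header), and the current paper uses it as a black box. So there is no ``paper's own proof'' of this statement to compare against.

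That said, your outline is faithful to the strategy the authors describe for~\cite{AV2}. The paper explicitly says that~\cite{AV2} ``adapts the quantitative homogenization by `coarse-graining' program of~\cite{AKM19,AK22}'' to Poisson clouds, and the later arguments in Section~\ref{ss.main} invoke precisely the ingredients you name: first-order correctors~$\varphi_e$ with optimal bounds (\cite[Theorem~1.2, Section~7]{AV2}), the two-scale expansion error computation (\cite[Lemmas~5.4--5.5]{AV2}), the large-scale~$C^{0,1}$ regularity with random minimal scale~$\X$ (\cite[Theorem~1.1]{AV2}), and graph Poincar\'e inequalities on well-connected cubes (\cite[Lemma~2.3]{AV2}). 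Your identification of the main technical obstacles---coarse-grained replacements for Meyers/Caccioppoli valid only above~$\X$, and the boundary-layer handling---also matches what one infers from those citations. There is nothing substantively wrong with your sketch; it is simply a sketch of a result proved elsewhere rather than here.
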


From a numerical analysis perspective, an alternative viewpoint on Theorem~\ref{t.AV2-main} is this: viewed as a numerical method, the problem~\eqref{e.GDP} is a (mesh-free) sort of finite difference scheme that is \emph{consistent} provided we are above the \emph{percolation threshold}~$\alpha > \alpha_c(d).$ We also have the sharp quantitative error estimates provided by~\eqref{e.rateofconv}. We emphasize that the consistency above the percolation threshold is not a mere technicality: in this regime, the graph has significantly fewer connections, and is therefore much \emph{sparser} than that permitted by the theoretical studies quoted above. Notably, in our setting, the graph is not connected per se, but has an unbounded connected component which ``percolates''. Analyses by previous researchers are valid \emph{only in} regimes in which the graph is connected almost surely~(see e.g.\cite{GGHS}), or more commonly, what we will refer to as ``well above graph connectivity''~(see e.g. \cite{CGL,CGT,Dobson,WR,Lu}), in which, the graph has a ``very high intensity'' of connections. 
 
\smallskip
In this paper we provide the optimal answer to the question:  \emph{how many eigenvalues and eigenfunctions of the Dirichlet Laplacian~$-\nabla \cdot \ahom \nabla$ on the domain~$U_0$ are well-approximated, to first order, by (suitably rescaled) eigenvalues and eigenfunctions of the graph Laplacian generated by the above random geometric graph construction? }

\smallskip 
Rather than considering the spectrum of the graph Laplacian on a Euclidean domain (with Dirichlet boundary conditions, as above), the papers quoted above deal with similar point clouds (modeling real data)  clustered on a closed~$d-$dimensional manifold (the ``ground truth'' manifold). Ignoring this distinction for the moment, these papers work at or above the regime of \emph{graph connectivity}. They show (roughly paraphrased to our notation) that with high probability (quantified suitably)
\begin{equation} \label{e.history}
	|3^{2m}\la_{m,k} - \la_{0,k}| \leqslant \begin{cases}
		C  \la_{0,k}^{\sfrac32} 3^{-\frac{m}2}  &  \cite{GGHS} \mbox{ (at graph connectivity)} \\
		C \|\psi_{0,k}\|_{C^3}3^{-m}  & \cite{CGT}  \mbox{ (well above graph connectivity)} \,. 
	\end{cases}
\end{equation}

Here, we have used the notation~$\la_{0,k}$ to denote the~$k$th Dirichlet eigenvalue of the continuum operator~$-\nabla \cdot \ahom \nabla$ \emph{on the domain}~$U_0$ (our stand-in for the ground truth manifold) with associated~$L^2-$normalized eigenfunction~$\psi_{0,k}$; the term~$3^{2m}\la_{m,k}$ denotes the~$k$th eigenvalue of the graph Laplacian where the graph is obtained by joining points that are within \emph{unit distance} on the large domain (or manifold) that has been inflated by a factor of~$3^m$. The results in~\cite{GGHS} obtain (sub-optimal) square-root convergence rates of~$3^{-\frac{m}2},$ provided the intensity of the point cloud is sufficiently high to ensure that the graph is \emph{connected} almost surely. However,  as we explain below, the dependence in~\cite{GGHS} of the prefactor on~$k$, the index of the eigenvalue, is \emph{sharp}, and the constant~$C$ is independent of~$k.$ 

\smallskip 
By contrast, the paper~\cite{CGT} obtains a linear rate of convergence~$3^{-m}$, but is valid only on scales well above the connectivity threshold. In our notation, they require that the intensity~$\alpha$ tends to infinity with the ratio of scale separation. Moreover, the pre-factor in the estimate \emph{depends on}~$k$ through the~$C^3$ norm of the associated~$L^2-$normalized eigenfunction~$\phi_{0,k}.$ In applications inspired by machine learning (as considered by these articles), one does not typically have access to geometric information on the ground truth manifolds, as to the growth rates of their Laplace-Beltrami eigenfunctions. Inserting maximal growth rate estimates (see Lemma~\ref{l.moseriteration} for the case of Dirichlet eigenfunctions for a Euclidean domain, and~\cite{SZ} for a closed manifold)  which asserts that~$\|\psi_{0,k}\|_{L^\infty} \lesssim \la_{0,k}^{\frac{d}{4}}$, the estimate from~\cite{CGT} implies
\[ |3^{2m} \la_{m,k} - \la_{0,k}| \leqslant C 3^{-m} \la_{0,k}^{\frac{d}{4} + \frac32}\,.\] 

\smallskip 
Having obtained convergence rates for eigenvalues, obtaining convergence rates in~$L^2$ for eigenfunctions is straightforward (as will be recalled below), and is essentially given by the eigenvalue rate multiplied by the reciprocal of the spectral gap (the distance to the nearest distinct eigenvalue, see~\eqref{e.sg} below) (see~\cite{GGHS,CGT}). The papers~\cite{CGL,Dobson} also offer rates of convergence for eigenvectors in~$L^\infty,$ provided one is well above the connectivity threshold. 

The summary in~\eqref{e.history} and the subsequent discussion, is not intended to be an exhaustive summary of prior work on quantitative rates of convergence of the spectrum of graph Laplacian towards its continuum counterparts; as mentioned earlier, see also \cite{shi2015convergence,BIK,Lu,Dobson,WR} for other recent contributions to this question with similar results. All of these papers work in a regime that is well above the connectivity threshold, thereby ensuring that the graph is very dense and connected.

\smallskip  

We turn to the first of our main theorem. For its statement, we introduce the spectral gap~$\gamma(\la_{0,k})$ of an eigenvalue~$\la_{0,k}$ via
\begin{equation}
	\label{e.sg}
	\gamma(\la_{0,k}) := \min_j\{|\la_{0,j} - \la_{0,k}| : \la_{0,j} \neq \la_{0,k}\}\,.
\end{equation}
For simplicity, in this paper we focus on the case of the Dirichlet eigenfunctions on Euclidean domains and prove the following statement. 

\begin{theorem} \label{t.main-spec}
Let~$\alpha  > \alpha_c(d)$. There exist constants~$C(d,\alpha) , s(d) > 0$ and a nonnegative random variable~$\X$ satisfying 
\begin{equation} \label{e.stochint}
	\P[ \X > Ct] \leqslant 2 \exp (-t^s) \ \ \mbox{ for all } t > 0\,,
\end{equation}
such that for any~$m \in \N$ with~$3^m > \X,$ and for any~$k \in \N$ such that
\begin{equation}
	\label{e.epskcondition}
	\begin{cases}
		3^{-m}  \sqrt{\la_{0,k}} < \frac1{C} & \mbox{ if } d \geqslant 3 \,, \\
		m^{\sfrac12}3^{-m}  \sqrt{\la_{0,k}}  < \frac1{C} & \mbox{ if } d = 2\,,
	\end{cases}
\end{equation} 
we have 
\begin{equation} \label{e.eigvalrates}
 \frac{	|3^{2m}\la_{m,k} - \la_{0,k}| }{\la_{0,k}}\leqslant \begin{cases}
 	C3^{-m} \sqrt{\la_{0,k}} & \mbox{ if } d \geqslant 3 \,, \\
 		Cm^{\sfrac12}3^{-m} \sqrt{\la_{0,k}}  & \mbox{ if } d = 2\,.
 \end{cases} 
\end{equation}
In addition, for the continuum eigenfunction~$\phi_{0,k}$ \emph{on the domain~$U_m$}, normalized so that 
\[\fint_{U_m} \phi_{0,k}^2 = 1\,,\]
 and the discrete eigenvector~$\phi_{m,k}$ on the graph~$\eta_*(U_m)$, normalized so that
\begin{equation}
	\label{e.weirdnormalization}
	\avsum_{x \in \eta_*(U_m)} \phi_{0,k}(x) \phi_{m,k}(x) = 1\,,
\end{equation} 
 we have the estimate 
\begin{equation} \label{e.evratesL2}
\frac1{\la_{0,k}}	\biggl(\frac{1}{|U_m|}\sum_{x \in \eta_*(U_m)} (\phi_{m,k}(x) - \phi_{0,k} (x)) ^2 \biggr)^{\sfrac12}\leqslant \frac{1}{\gamma(\la_{0,k})} \times  \begin{cases}
		C3^{-m} \la_{0,k}^{\sfrac12} & \mbox{ if } d \geqslant 3\\
		Cm^{\sfrac12}3^{-m} \la_{0,k}^{\sfrac12}  & \mbox{ if } d = 2\,.
	\end{cases} 
\end{equation}
\end{theorem}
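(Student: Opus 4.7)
The plan is to combine the Courant--Fischer min-max principle with the $L^2$ homogenization estimate of Theorem~\ref{t.AV2-main}. I would work throughout on the dilated domain $U_m$, on which the continuum $k$-th eigenvalue is $3^{-2m}\lambda_{0,k}$, matching the graph scale; the factor $3^{2m}$ appearing in~\eqref{e.eigvalrates} then converts back to the $U_0$-normalization used in the statement. Both $-\mathcal{L}$ on $\eta_*(U_m)$ with zero Dirichlet data and $-\nabla\cdot\ahom\nabla$ on $U_m$ are compact self-adjoint positive operators, so their eigenvalues admit variational characterizations.

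For the upper bound on $\lambda_{m,k}$, I would build a test subspace from the first $k$ continuum eigenfunctions, suitably discretized. For each $j \leqslant k$, apply Theorem~\ref{t.AV2-main} with $\uhom = \phi_{0,j} \in H^2 \cap H_0^1(U_m)$ (valid since $U_0$ is convex or $C^{1,1}$) to produce a graph function $v_j$ solving $-\mathcal{L} v_j = 3^{-2m}\lambda_{0,j}\phi_{0,j}$ with zero boundary values and satisfying $\|v_j - \phi_{0,j}\|_{L^2(\eta_*(U_m))} \leqslant C 3^{-m}\|\phi_{0,j}\|_{L^2(U_m)}$ (times $m^{\sfrac12}$ in $d=2$). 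Near-orthogonality of $\{v_j\}$ in the graph $L^2$ follows by combining this estimate with standard Poisson concentration for the cross-inner products $\langle \phi_{0,i}, \phi_{0,j}\rangle_{L^2(\eta_*(U_m))}$. Substituting $V = \spn(v_1, \ldots, v_k)$ into Courant--Fischer, the Rayleigh quotients simplify via the eigenfunction equations for $v_j$, and linear algebra on the (perturbed identity) Gram matrix yields the upper half of~\eqref{e.eigvalrates}.

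The matching lower bound is cleanest via duality: pass to the graph Green's function $\mathcal{G}_m = (-\mathcal{L})^{-1}$ and the continuum Green's function $\mathcal{G}_0 = (-\nabla\cdot\ahom\nabla)^{-1}$, whose eigenvalues are reciprocals of the Laplacian eigenvalues. Applying the same test-function construction to these positive, compact, self-adjoint operators---Theorem~\ref{t.AV2-main} precisely says that $\mathcal{G}_0 f$ restricted to $\eta_*$ is a near-eigenvector of $\mathcal{G}_m$---produces the reciprocal bound and completes~\eqref{e.eigvalrates}. The eigenvector estimate~\eqref{e.evratesL2} then follows from a standard spectral-gap argument: write the test vector $v_k$ above in the discrete eigenbasis as $v_k = \sum_j c_j \phi_{m,j}$, use the identity $(-\mathcal{L} - 3^{-2m}\lambda_{0,k})v_k = 3^{-2m}\lambda_{0,k}(\phi_{0,k} - v_k)$ to obtain $|c_j| \leqslant 3^{-2m}\lambda_{0,k}\|v_k - \phi_{0,k}\|_{L^2(\eta_*)}/|\lambda_{m,j} - 3^{-2m}\lambda_{0,k}|$ for $j \ne k$, sum over $j$ using the spectral gap $\gamma(\lambda_{0,k})$ (transferred to the discrete spectrum via the already-proved~\eqref{e.eigvalrates}), and absorb the diagonal component using the normalization~\eqref{e.weirdnormalization}.

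The main obstacle is extracting the sharp dependence $\sqrt{\lambda_{0,k}}$ in the relative error. A naive bound using $\|\phi_{0,k}\|_{H^2} \asymp \lambda_{0,k}$ in the homogenization rate, or the Moser bound $\|\phi_{0,k}\|_{L^\infty} \lesssim \lambda_{0,k}^{d/4}$ (Lemma~\ref{l.moseriteration}) in the Poisson concentration step, would recover only the suboptimal polynomial-in-$\lambda_{0,k}$ dependence of prior work summarized in~\eqref{e.history}. Achieving the optimal exponent forces one to use Theorem~\ref{t.AV2-main} in its \emph{relative} $L^2$ form (with right-hand side $\|\uhom\|_{L^2}$, not $\|\uhom\|_{H^2}$) and to exploit that the admissibility threshold~\eqref{e.epskcondition} is precisely the regime where the continuum and discrete spectra remain comparable; keeping track of the exact polynomial powers of $\lambda_{0,k}$ at each step, while only degrading by an honest factor $\sqrt{\lambda_{0,k}}$ in the end, is where the real work lies.
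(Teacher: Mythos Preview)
Your overall architecture---Courant--Fischer combined with Theorem~\ref{t.AV2-main}, working with the inverse operators---matches the paper's. However, the ``duality'' step for the lower bound on $\lambda_{m,k}$ contains a genuine gap. Using continuum eigenfunctions (or the $v_j$ you build from them) as a $k$-dimensional test space in the max--min characterization of $\mu_{m,k} = \lambda_{m,k}^{-1}$ yields $\mu_{m,k} \geqslant \mu_{0,k} - \mathrm{error}$, which is again the \emph{upper} bound on $\lambda_{m,k}$. Switching from the direct operator to the Green's operator does not reverse the inequality: Theorem~\ref{t.AV2-main} only lets you push continuum functions onto the graph, not the other way around, so every test-space construction you can run with it produces the same direction. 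The opposite inequality $\mu_{m,k} \leqslant \mu_{0,k} + \mathrm{error}$ requires you to control a maximum over \emph{all} graph functions (orthogonal to some prescribed $(k{-}1)$-dimensional space), and the maximizer is a discrete eigenfunction about which you have no a priori continuum information.

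The paper closes this gap by a truncation-and-bootstrap argument that you do not mention. It introduces an auxiliary quantity $\mu_{m,k}^T$ in which competitors are continuum functions with an imposed $L^\infty$ bound $\|f\|_{L^\infty} \leqslant 4T\lambda_{0,k}^{d/4}$; the Monte-Carlo estimates comparing discrete sums to integrals (which require such an $L^\infty$ bound) then give $\mu_{m,k}^T \leqslant \mu_{0,k} + C\sqrt{\mu_{0,k}}$ with a probability depending on $T$. One must then show that $T$ can be chosen independently of $m$: this uses the large-scale regularity theory to prove an $L^\infty$ bound on the \emph{discrete} eigenfunction $\phi_{m,k}$, which in turn relies on the $L^2$ eigenvector rate and the two-scale expansion with correctors. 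This circularity is resolved by organizing the argument as an induction on $k$, where at step $K$ the eigenvector rates for $j<K$ are used to show that graph-orthogonality to $\{\phi_{m,j}\}_{j<K}$ and continuum-orthogonality to $\{\phi_{0,j}\}_{j<K}$ are interchangeable up to the desired error. Your proposal does not engage with either of these mechanisms.

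Your spectral-projection argument for the $L^2$ eigenvector rate is a valid alternative to the paper's two-scale expansion route and would work once the eigenvalue rates are in hand; the paper's choice of the corrector expansion is motivated by the fact that it simultaneously feeds the $L^\infty$ and edge-wise estimates of Theorem~\ref{c.LSR}, which are in turn needed to close the truncation argument above. You are right that the sharp $\sqrt{\lambda_{0,k}}$ prefactor is the crux and that the relative $L^2$ form of Theorem~\ref{t.AV2-main} is essential; in the paper this manifests through the basic estimates $\|\nabla T_0 f\|_{\underline{L}^2}^2 \leqslant C\mu_{0,k}$ and $\|f\|_{\underline{H}^{-1}} \leqslant C\sqrt{\mu_{0,k}}$, which convert to the claimed power after undoing the reciprocal.
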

Let us highlight the salient features of Theorem~\ref{t.main-spec} and compare it with prior work. 
\begin{itemize}
	\item The condition~\eqref{e.epskcondition} simply refers to the relationship between~$k$ and~$m$ which ensures that the estimates~\eqref{e.eigvalrates}--\eqref{e.evratesL2} contain useful information. The latter estimate is sharp in three aspects: (i) its \emph{linear} scaling dependence on~$3^{-m}$ (contrast between macro and microscales), (ii) dependence of the prefactor on~$\la_{0,k},$ and (iii) validity down to the percolation threshold.  
	
	\item The theorem statement above is rather detailed:  the main point of Theorem~\ref{t.main-spec} is the estimate: \emph{down to the percolation threshold}, that is, if~$3^m > \X,$ where the nonnegative random variable~$\X$ satisfies~\eqref{e.stochint}, then in comparison with previous results~(e.g. as summarized in~\eqref{e.history})
	\begin{equation*}
		|3^{2m} \la_{m,k} - \la_{0,k} | \leqslant C \la_{0,k}^{\sfrac32} 3^{-m}\,. 
	\end{equation*}
	\item The convergence rates in~\eqref{e.eigvalrates} matches with those in periodic homogenization \cite{KLS} (aside from the factor~$\sqrt{\log (3^m)} = C \sqrt{m}$ in~$d=2$ dimensions, which is intrinsic-- see~\cite{AKM19}). 
\end{itemize}
To gain some intuition why the scalings in the preceding theorem are sharp, let us note that viewing eigenfunctions~$\{\phi_{0,k}\}$ as waves with frequency~$\sqrt{\la_{0,k}}$ (or equivalently, wavelength~$\la_{0,k}^{-\sfrac12}$), the condition~\eqref{e.epskcondition} asserts that the length scale of disorder must be smaller the inverse frequency. Therefore, the estimate~\eqref{e.eigvalrates} asserts that the (deterministic) eigenvalue in the continuum approximates the discrete (random) eigenvalue of the graph Laplacian, provided there is enough room for homogenization. 

\smallskip 
Our next theorem concerns convergence rates for the eigenfunctions in a stronger pointwise sense, and convergence of gradients. We obtain these estimates as a consequence of the large-scale regularity we developed in~\cite{AV2}. Unlike in the previous papers~\cite{CGL,Dobson} with similar rates, we can obtain estimates \emph{down to every edge} of the graph, still valid \emph{down to the percolation threshold}. Our theorem also offers an improvement in the dependence of the prefactor on the eigenvalue~$\la_{0,k}$.

\smallskip 
 For the statement, we need some notation. In~\cite{AV2} we constructed a family of first-order correctors~$\{\varphi_{e}: e \in \Rd\}$, which are certain random fields that are stationary modulo additive constants on the unique infinite percolation cluster on all of space,~$\eta_*(\Rd).$ For each~$m$ with~$3^m > \X$, the minimal scale from Theorem~\ref{t.main-spec}, and for each~$k$ satisfying~\eqref{e.epskcondition}, we set 
\begin{equation*}
	v_{m,k} := \phi_{0,k} + \sum_{i=1}^d \partial_{x_i}\phi_{0,k} \varphi_{e_i}\,. 
\end{equation*}

\begin{theorem}
	\label{c.LSR}
Let~$\alpha > \alpha_c(d),$ and let~$\X$ be as in the statement of Theorem~\ref{t.main-spec}. Then, there exists~$C(d,\alpha) > 0$ such that for every~$m \in \N$ such that~$3^m > \X,$ and for every~$k \in \N$ such that~\eqref{e.epskcondition} holds, we have: for any~$x=3^m \zeta \in \eta_*(U_m), \zeta \in U_0$,
\begin{equation} \label{e.Loo}
	|\phi_{m,k} (x) - \phi_{0,k}(x)| \leqslant \frac{C}{\mathrm{dist}(\zeta,\partial U_0)}\bigl( 1 + \sqrt{m}\indc_{d=2} \bigr) (1+\la_{0,k})^{\sfrac{d}4 + \sfrac32} 3^{-m}\,.
\end{equation}
In addition, for any~$x = 3^m\zeta  \in \eta_*(U_m)$ for~$\zeta \in U_0,$ and for any~$y,z \in \eta_*(B_{\X}(x))$ with~$y\sim z$, we have
\begin{align}
	\label{e.C01}
	\lefteqn{ \bigl| \phi_{m,k} (y) - \phi_{m,k}(z) - \bigl( v_{m,k} (y) - v_{m,k}(z) \bigr)\bigr| } 
	\qquad\qquad & 
	\notag \\ & 
	\leqslant \frac{C}{\mathrm{dist}(\zeta,U_0)}  \bigl( 1 + \sqrt{m}\indc_{d=2} \bigr)   (1 + \la_{0,k})^{\sfrac{d}{4} + \sfrac32}  3^{-m}\,. 
\end{align}
\end{theorem}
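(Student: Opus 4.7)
The plan is to deduce both estimates from the large-scale $L^\infty$ and $C^{0,1}$ regularity theory for the discrete Poisson equation developed in~\cite{AV2}, combined with a discrete Moser iteration for a priori $L^\infty$ bounds on the eigenvectors and elliptic regularity for the continuum eigenfunctions.

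\textbf{Proof of~\eqref{e.Loo}.} Fix $x = 3^m\zeta \in \eta_*(U_m)$ with $\zeta\in U_0$, and set $R := \tfrac{1}{4}\, 3^m \mathrm{dist}(\zeta,\partial U_0)$. Since $3^m > \X$, the ball $B_R(x) \subset U_m$ satisfies $R > \X$, so the large-scale regularity of~\cite{AV2} applies on it. The function $w := \phi_{m,k} - \phi_{0,k}$ satisfies
\begin{equation*}
-\mathcal{L}w(y) = \lambda_{m,k}\phi_{m,k}(y) + \mathcal{L}\phi_{0,k}(y), \qquad y \in \eta_*(B_R(x))\,,
\end{equation*}
and I would decompose $\mathcal{L}\phi_{0,k}(y) = (\nabla\cdot\ahom\nabla\phi_{0,k})(y) + e(y) = -3^{-2m}\lambda_{0,k}\,\phi_{0,k}(y) + e(y)$, with $e$ a two-scale homogenization remainder of size $O(\|\nabla^2\phi_{0,k}\|_{L^\infty})$. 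Applying the large-scale $L^\infty$ regularity from~\cite{AV2} to the discrete Poisson equation satisfied by $w$ on $B_R(x)$, I obtain $|w(x)|$ bounded by the $L^2$-average of $w$ on $B_R(x)$ plus $R^2$ times $\|\mathcal{L}w\|_{L^\infty(B_R(x))}$. The forcing is controlled by an a priori discrete $L^\infty$ bound $\|\phi_{m,k}\|_{L^\infty} \lesssim \lambda_{0,k}^{\sfrac{d}{4}}$ (via Moser iteration on the cluster using the Sobolev--Poincar\'e inequalities from~\cite{AV2}) and the second-derivative bound $\|\nabla^2\phi_{0,k}\|_{L^\infty(U_m)} \lesssim \lambda_{0,k}^{\sfrac{d}{4}+1} \cdot 3^{-2m}$ (Lemma~\ref{l.moseriteration} together with standard elliptic regularity). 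Tracking these factors gives~\eqref{e.Loo}, with the exponent $\sfrac{d}{4}+\sfrac{3}{2}$ arising as the $\sfrac{d}{4}$ of the eigenfunction Moser bound combined with further derivative losses.

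\textbf{Proof of~\eqref{e.C01}.} For the edge-level two-scale statement, I would appeal directly to the large-scale $C^{0,1}$ regularity of~\cite{AV2}, which states (roughly) that if $u$ satisfies a discrete Poisson equation on $B_R(x)$ with $R > \X$ and is $L^2$-close on $B_R$ to an affine function $\ell$, then the oscillation of $u - \ell - \sum_i (\partial_i \ell)\,\varphi_{e_i}$ across any edge in $B_{\X}(x)$ is controlled by the $L^2$-closeness plus a forcing term. Taking $u = \phi_{m,k}$ and $\ell(y) = \phi_{0,k}(x) + \nabla\phi_{0,k}(x)\cdot(y-x)$, the first-order Taylor expansion of $\phi_{0,k}$ at $x$, I would bound $\|\phi_{m,k} - \ell\|_{L^2(B_R(x))}$ by combining~\eqref{e.Loo} with Taylor-remainder control on $\phi_{0,k}$, and observe that $v_{m,k} - \ell$ equals $\sum_i \partial_i\phi_{0,k}\,\varphi_{e_i}$ modulo a quadratic Taylor remainder of lower order; this yields~\eqref{e.C01}.

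\textbf{Main obstacle.} The subtlest point is avoiding the spectral-gap factor $1/\gamma(\lambda_{0,k})$ which appears in the $L^2$ rate of Theorem~\ref{t.main-spec} but is absent in~\eqref{e.Loo}. I expect this is resolved by never using the global $L^2$ comparison as the primary input; instead, the a priori $L^\infty$ bound on the eigenvectors from Moser iteration (which does not involve $\gamma$), combined with the homogenization-generated control on the forcing $\lambda_{m,k}\phi_{m,k} + \mathcal{L}\phi_{0,k}$, feeds directly into the large-scale regularity on intermediate-scale balls. A secondary technical point is the careful translation between the counting measure on the graph and Lebesgue measure on the continuum, handled by the Poincar\'e inequalities of~\cite{AV2}.
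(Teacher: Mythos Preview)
Your approach for~\eqref{e.Loo} contains a genuine gap at the decomposition step. You claim that~$\mathcal{L}\phi_{0,k}(y) = (\nabla\cdot\ahom\nabla\phi_{0,k})(y) + e(y)$ with~$\|e\|_{L^\infty} = O(\|\nabla^2\phi_{0,k}\|_{L^\infty})$, but this is false. Taylor expanding,~$\mathcal{L}\phi_{0,k}(x) = \nabla\phi_{0,k}(x)\cdot\!\sum_{y\sim x}(y-x) + \tfrac12\sum_{y\sim x}D^2\phi_{0,k}(x)[y-x,y-x] + O(\|D^3\phi_{0,k}\|)$. The linear term~$\nabla\phi_{0,k}(x)\cdot\sum_{y\sim x}(y-x)$ does \emph{not} vanish pointwise on a Poisson cloud; it is a random quantity of size~$|\nabla\phi_{0,k}(x)|$, not~$|\nabla^2\phi_{0,k}(x)|$. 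Moreover, even the quadratic term does not average to~$\nabla\cdot\ahom\nabla\phi_{0,k}$: the homogenized matrix~$\ahom$ is defined through the correctors and is not the naive second moment of the edge displacements. So the remainder~$e$ you write down is actually of size~$\|\nabla\phi_{0,k}\|_{L^\infty}\sim 3^{-m}\lambda_{0,k}^{\sfrac{d}4+\sfrac12}$ in~$L^\infty$, and when you feed this into an estimate of the form~$R^2\|\mathcal{L}w\|_{L^\infty}$ with~$R\sim 3^m$, the result blows up like~$3^{m}$ rather than decaying like~$3^{-m}$.

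The paper's remedy is precisely to \emph{include the correctors in the subtraction}: one sets~$w_{m,k} := \phi_{m,k} - v_{m,k}$ with~$v_{m,k} = \phi_{0,k}+\sum_i\partial_{x_i}\phi_{0,k}\,\varphi_{e_i}$, and then~$-\mathcal{L}v_{m,k} + \nabla\cdot\ahom\nabla\phi_{0,k}$ is small \emph{in~$H^{-1}$} (not~$L^\infty$) thanks to the corrector equations; see the computation of~\eqref{e.ansatzeq} and the term-by-term~$\underline{H}^{-1}(\eta_*(B_t))$ bounds~\eqref{e.estforB}--\eqref{e.A}. The large-scale regularity theorem from~\cite{AV2} is then applied in its~$C^{0,1}$ form, with the forcing measured via the Campanato-type integral~$\int_r^R \|f\|_{\underline{H}^{-1}(\eta_*(B_t))}\frac{dt}{t}$, which yields the gradient bound~\eqref{e.LSR-final}; both~\eqref{e.C01} and~\eqref{e.Loo} then follow by discreteness and Poincar\'e on balls of radius~$\sim\X$. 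Your plan for~\eqref{e.C01} is closer in spirit (you do invoke correctors there), but it is routed through an affine comparison that presupposes~\eqref{e.Loo}, so the same gap propagates. The fix is to work with~$w_{m,k}=\phi_{m,k}-v_{m,k}$ from the outset for \emph{both} estimates, and to measure the forcing in~$H^{-1}$ rather than~$L^\infty$.
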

\begin{remark}
	In contrast with prior results~(see eg. \cite[Theorem 2.3]{CGL}), where the authors prove an ``approximate Lipschitz property'' of eigenfunctions, the estimate~\eqref{e.C01} contains information about \emph{every} edge of the graph. In addition, we also offer improved (and optimal) prefactors in the dependence on~$\la_{0,k}$. To see that this is the optimal rate one can get, we note that by Lemma~\ref{l.moseriteration}, the~$L^\infty$ norm of the eigenfunction~$\phi_{0,k}$ scales like~$\la_{0,k}^{\sfrac{d}4},$ and by the previous theorem,~$\la_{0,k}^{\sfrac32}$ is the prefactor from the convergence rates for eigenvalues (which matches the rate found in periodic homogenization). 
\end{remark}

Numerical experiments in~\cite{CGT} suggest that by using an unweighted graph Laplacian on a random geometric graph on the standard unit sphere~$\mathbb{S}^2$, one obtains at least \emph{second order convergence rates} (that is,~$3^{-2m}$ in the estimate~\eqref{e.eigvalrates}) for eigenvalues and eigenvectors, even for fairly sparse graphs. An analytical understanding of these numerical observations is still lacking. By way of comparison, a centered finite difference scheme based on a 5-point stencil for the approximation of Laplacian eigenfunctions on a planar domain is second order accurate-- this is because of the cancellation of the first order terms coming from~$x \pm e_i, i = 1,2.$ In the present random geometric graph setting, each vertex has a large (and indeed, unbounded) degree, and any such cancellation must necessarily be a large-scale effect. 

\smallskip

Towards gaining an understanding of these observations, and in order to de-emphasize the role played by boundary layers~\cite{SV,MV}, in~\cite{AV1} we considered periodic homogenization for eigenvalues of an operator with a quadratically confining potential, that is, an operator of the form~$-\nabla \cdot \a \nabla  + W,$ where~$\a$ is~$\Zd$-periodic and~$W$ is a smooth potential. Using higher order expansions in homogenization, we showed \emph{quadratic convergence rates} for eigenvalues and eigenfunctions. As we showed, the~$O(\e)$ term vanishes  due to a structural cancellation property of the third order homogenized tensors. 

\smallskip

While we do not pursue this direction here, we expect that a similar explanation of the numerical simulations in~\cite{CGT}, through a higher order corrector expansion, can be shown using the ideas in~\cite{AV1} and the homogenization machinery developed in~\cite{AV2} and the present paper. 
The algebraic computations involving the homogenized tensors are essentially the same in the periodic and random cases, provided that the second-order correctors exist and possess sufficiently good bounds. These may be obtained by the arguments of~\cite{AV2}.
Moreover, as remarked in~\cite{AV2}, adapting the homogenization methods to the setting of general manifolds (as opposed to Euclidean space) does not require fundamentally new ideas.
Finally, we mention that, while the main theorems are stated for the Dirichlet spectrum, similar statements hold for the Neumann spectrum with essentially the same proof.

	  \section*{Acknowledgements}
	  S.A. acknowledges support from the NSF grants DMS-1954357 and DMS-2000200. R.V. acknowledges support from the Simons foundation through award number~733694. R.V. warmly thanks Dejan Slep\v{c}ev for bringing this problem to his attention, and for his encouragement, and thanks Bob Kohn for helpful conversations.  
	  
	\section{Convergence rates for the spectrum of the graph Laplacian}
\subsection{Notation and Preliminaries}
Throughout, we work in dimensions~$d \geqslant 2$, and we largely use the same notation as in our companion paper~\cite{AV2}. 

\smallskip 
\emph{Discrete notation:} A \emph{cube} is a set of the form
\begin{equation*}
	\Zd \cap \bigl(z + [0,N)^d\bigr), \quad z \in \Zd, N \in \N\,. 
\end{equation*}
The center of this cube is~$z$ and its \emph{size} is~$N,$ denoted~$\size(\cu).$  A \emph{triadic cube}~$\cu$ is a cube of the form 
\begin{equation*}
	\cu_m (z) := \Zd \cap \Bigl(z + \Bigl[ - \frac12 3^m, \frac12 3^m \Bigr)^{\!d} \, \Bigr) \,, \quad z \in 3^m \Zd \,, \ m \in \N\,. 
\end{equation*}
When~$z = 0$, we write~$\cu_m$ instead. 

\smallskip 
\emph{Graph construction:}
Let~$\eta$ denote a Poisson point process on~$\Rd$ with intensity~$\alpha > 0.$ We also denote by~$\P$ the law of this random point cloud. For each realization of the point cloud, we create a graph by connecting any pair of points separated by a Euclidean distance no more than one. This graph will have infinitely many connected components, and if~$\alpha > \alpha_c(d)$ (the supercritical regime) the graph has a unique unbounded connected component. 

Given a cube~$\cu,$ \cite[Definition 2.1]{AV2} identifies \emph{well-connected cubes}~$\cu$ characterized by the following properties: the unique connected component of the percolation cluster in~$\cu$ (denoted~$\eta_*(\cu)$) , has diameter at least~$\frac{1}{100} \size(\cu)$, the number of points in~$\cu$ is roughly the volume of the cube, and finally, each point in the cube~$\cu$ has a neighbor in the percolation cluster within distance~$\frac1{100} \size(\cu).$ 

Next, given a Borel set~$U\subset \Rd$, we let~$\eta_*(U)$ denote the largest connected component of~$\eta_*(\cu)\cap U,$ where~$\cu$ is the smallest triadic cube containing~$U$ such that~$3\cu$ is well-connected (in the sense of~\cite[Definition 2.1]{AV2}). Using results of Penrose~\cite{Penrose} and Penrose and Pisztora~\cite{PP}, it is shown in~\cite[Proposition 2.2]{AV2} that large cubes are well-connected except on an event with probability which is exponentially small in the size of the cube. 

\smallskip 
We will use the~$\O_s$ notation introduced in~\cite{AKM19} to measure the stochastic integrability of random variables. Given~$s,\theta > 0,$ and a nonnegative random variable~$\X,$ we write 
\begin{equation*}
	\X = \O_s(\theta) \iff \E \biggl[ \exp \biggl( \frac{X}{\theta} \biggr)^s \biggr] \leqslant 2\,. 
\end{equation*}
By Markov's inequality, it follows that 
\begin{equation*}
	\X = \O_s (\theta) \implies \P [ \X \geqslant \theta t] \leqslant 2 \exp (-t^s), \, \quad t > 0\,.
\end{equation*}
Given a Borel set~$U$ and a function~$f:\eta_*(U) \to \R$, we write 
\begin{equation*}
	(f)_U := \avsum_{x \in \eta_*(U)} f(x) \equiv \frac{1}{|\eta_*(U)|} \sum_{x \in \eta_*(U)} f(x)\,. 
\end{equation*}
Here and in what follows, the notation~$|A|$ denotes the cardinality of the set~$A$ if it is finite, and instead, by a convenient abuse of notation, its Lebesgue measure if~$A \subset \Rd$ if it is a set in the continuum. 
We use normalized norms. To be precise, for a function~$f : \eta_*(U) \to \R$ we write 
\begin{equation*}
	\|f\|_{\underline{L}^2(\eta_*(U))} := \biggl( \avsum_{x \in \eta_*(U)} f(x)^2 \biggr)^{\sfrac12} \,. 
\end{equation*} 
Given~$x, y \in \eta_*$ we write~$x \sim y$ if~$|x-y|\leqslant 1$ (in Euclidean distance). We write 
\begin{equation*}
	\|f\|_{\underline{H}^1(\eta_*(U))} := \biggl(\avsum_{x , y : y \sim x\in \eta_*(U)} (f(x) - f(y))^2 \biggr)^{\sfrac12}\,. 
\end{equation*}
It is clear that~$\underline{H}^1(\eta_*(U))$ seminorm comes with a natural inner product: for functions~$f,g : \eta_*(U) \to \R$ we define 
\begin{equation*}
	\langle f, g \rangle_{\underline{H}^1(\eta_*(U))} := \avsum_{x, y  \in \eta_*(U) : y \sim x} \bigl( f(x) - f(y) \bigr) \bigl(g(x) - g(y) \bigr)\,. 
\end{equation*}
We define the~$\underline{H}^{-1}$ norm by duality via 
\begin{equation*}
	\|f\|_{\underline{H}^{-1}(\eta_*(U_m))} := \sup \biggl\{ \frac{\langle f, g \rangle_{\underline{H}^1(\eta*(U_m))}}{\|g\|_{\underline{H}^1(\eta_*(U_m))}} \bigg\vert g : \eta_*(U_m) \to \R, g(x) = 0 \mbox{ if } \mathrm{dist}(x,\partial U) \leqslant 2\biggr\}\,.
\end{equation*}

\subsection{Proof of Theorems~\ref{t.main-spec} and~\ref{c.LSR}} \label{ss.main}
	Throughout this section we think of~$m$ as fixed (but large, quantified by the statement of Theorem~\ref{t.main-spec}). 
	We begin by recalling some preliminaries and establishing the notation for this section. From the equation~\eqref{e.graphlap} we find that for any function~$u : \eta_*(U_m) \to \R$ we have 
	\begin{equation*}
		\avsum_{x \in \eta_*(U_m)} \sum_{y \in \eta_*(U_m)} \a(|y-x|) (u(y) - u(x)) u(x) = - \frac12 \avsum_{x,y \in U_m} \a(|y-x|)(u(y) - u(x))^2 < 0 \,,
	\end{equation*}
unless~$u$ is constant. Consequently, the map that takes~$f := - \nabla \cdot \ahom \nabla \uhom$ to~$u$ in~\eqref{e.GDP} can be represented by a symmetric matrix~$T_m$, that is positive definite, and has a list of positive eigenvalues, arranged in non-\emph{in}creasing order, labeled as
\begin{equation*}
	\mu_{m,1}  \geqslant \cdots \geqslant \mu_{m,N_m(U_0)} > 0, \ \  k \in \{1,2, \cdots, | \eta_*(U_m) \cap \{\mathrm{dist}(x,\partial U_m) \geqslant 2\}| \} =: N_m (U_0)\,.
\end{equation*}
We denote the associated eigenvectors by~$\{\phi_{m,k}\}_{k=1}^{N_m(U_0)},$ and we normalize these according to~\eqref{e.weirdnormalization}.  Setting 
\begin{equation*}
	\la_{m,k} := \mu_{m,k}^{-1}\,,
\end{equation*}
we have 
\begin{equation} \label{e.discreteeig}
	\begin{cases}
	-	\mathcal{L}\phi_{m,k} &= \la_{m,k} \phi_{m,k} \ \ \mbox{ in } \ \eta_* ( U_m) \cap \{\mathrm{dist}(x,\partial U_m) > 2\} \\
	\phi_{m,k} & = 0 \ \ \mbox{ on } \eta_* ( U_m) \cap \{\mathrm{dist}(x,\partial U_m) > 2\}\,. 
	\end{cases}
\end{equation}
The minimax theorem asserts that 
\begin{equation}
	\label{e.minmaxeps}
	\mu_{m,k} = \min_{f_1,\cdots, f_{k-1} } \biggl[\max_{\|f\|_{\underline{L}^2(U_m)} = 1, f \perp f_i, i = 1,\cdots, k-1}  \avsum_{x \in \eta_*(U_m)} T_m(f) (x) f(x)\biggr]\,;
\end{equation}
the first minimum is taken over functions~$f_1, \cdots, f_{k-1} : \mathrm{int}(\eta_*(U_m)) \to \R,$ and the orthogonality condition under the max is the condition that  for each~$i = 1,\cdots, k-1,$ admissible competitors must satisfy 
\begin{equation*}
	\sum_{x \in \eta_*(U_m)} f(x) f_i(x) = 0\,. 
\end{equation*}
Equivalently, setting 
\begin{equation*}
	V_{m,0} := \{0\}, \ \ V_{m,k} := \mathrm{span}( \phi_{m,1}, \cdots, \phi_{m,k-1}), \quad k \in \{1, \cdots, N_m(U_0)\}\,,
\end{equation*}
we have 
\begin{equation} \label{e.discrete-minmax}
	\mu_{m,k} = \max_{f \perp V_{m,k-1}, \|f \|_{\underline{L}^2(\eta_*(U_m)} = 1}  \avsum_{x \in \eta_*(U_m)} T_m (f)(x) f(x)\,.
\end{equation}

\smallskip 

Similarly, by standard elliptic theory and Theorem~\ref{t.AV2-main}, as~$\ahom(\la)$ is a positive, symmetric matrix, the operator~$-\nabla \cdot \ahom \nabla $ with Dirichlet boundary conditions \emph{on the domain~$U_0$}, has a list of eigenvalues~$\{\la_{0,k}\}_{k=1}^\infty$, arranged in nondecreasing order. For any~$f \in L^2(U_m),$ letting~$T_0(f) \in H^2(U_m) \cap H^1_0(U_m)$ be the unique solution to 
\begin{equation} \label{e.PDEhom}
	\begin{cases}
		- \nabla \cdot \ahom \nabla \uhom & = f, \ \ \mbox{ in } U_m\\
		\qquad \quad \ \ \uhom &= 0 \ \ \mbox{ on } \partial U_m\,, 
	\end{cases}
\end{equation} 
it is well known that~$T_0 : L^2(U_m) \to L^2(U_m)$ is a self-adjoint, compact operator. By scaling, the Dirichlet eigenvalues of~$-\nabla \cdot \ahom \nabla $ are given by~$3^{-2m}\la_{0,k}$, and so the associated eigenvalues of~$T_0$ are given by
\begin{equation} \label{e.mu0k}
\mu_{0,k} := 3^{2m}\la_{0,k}^{-1}\,.
\end{equation}
 We point out that, in our set up,~$\mu_{0,k}$ depends on~$m$ through a multiplicative factor, but we prefer not to explicitly notate this dependence, for clarity of exposition. Similarly to the discrete set up, we let~$\{\phi_{0,k}\}_{k \in \N}$ denote the orthonormal basis of~$L^2(U_m)$ of eigenfunctions of the Dirichlet Laplacian associated to~$-\nabla \cdot \ahom \nabla,$ \emph{on the domain~$U_m$}, and we set
\[
V_{0,0} = \{0\}, \ \ V_{0,k} := \mathrm{span} ( \phi_{0,1}, \cdots, \phi_{0,k-1}), \ \ k \in \N\,. 
\]
Analogous to~\eqref{e.discrete-minmax}, Courant's minmax theorem asserts that 
\begin{equation}
	\label{e.minmaxlimit}
	\begin{aligned}
	\mu_{0,k} &= \min_{f_1,\cdots f_{k-1} \in L^2(U_m)}\biggl[ \max_{ \|f\|_{\underline{L}^2(U_m)} = 1, \int_{U_m} f f_i = 0} \fint_{U_m} T_0(f) f\,dx\biggr]\\ 
&= \max_{f \perp V_{0,k-1}, \|f\|_{\underline{L}^2(U_m) = 1}} \fint_{U_m} T_0 (f)f\,dx\,.
	\end{aligned}
\end{equation}
By density, we may (and will) assume that the admissible competitors are continuous functions that are~$\underline{L}^2$ normalized-- this will enable us to restrict the function to the graph. 

\smallskip 
\begin{proof}[Proof of Theorem~\ref{t.main-spec}]
	The proof of~\eqref{e.eigvalrates}--\eqref{e.evratesL2} will be by induction on~$k$ that satisfy~\eqref{e.epskcondition}. It is organized as 
	\begin{itemize}
		\item a long \emph{Step 1}: the base case,
		\item a short \emph{Step 2}, which sets the induction hypothesis, and 
		\item the final \emph{Step 3}, which consists of the induction step. 
	\end{itemize}
	
	\smallskip 
	\emph{Step 1. Base case.} In this step we prove the desired rates when~$k = 1,$ i.e., for the ground states. This is the longest step in the proof and contains many of the ideas involved. 
	
	\emph{Convergence rates for the principal eigenvalue:}  By the variational principle for~$\mu_{0,1}$ we find that for any~$f \in C \cap L^2(U_m)$ with~$\|f\|_{\underline{L}^2(U_m)} = 1,$ we have 
	\begin{equation} \label{e.mu01}
		\mu_{0,1} = \max_{f \in C \cap L^2(U_m), \|f\|_{\underline{L}^2(U_m)} = 1} \fint_{U_m} T_0(f)f\,dx\,.
	\end{equation}
	 The maximizer is, of course, simple, and is given by~$f = \phi_{0,1}, $ which satisfies
	 
	 \[\fint_{U_m} T_0 (\phi_{0,1})\phi_{0,1} = \fint_{U_m} |\nabla \phi_{0,1}|^2 = 3^{-2m} \la_{0,1}\,.\]
	 In addition, by Lemma~\ref{l.moseriteration} we have~$\|\phi_{0,1}\|_{L^\infty(U_m)} \leqslant C_1(U_0) \la_{0,1}^{\frac{d}4}\,. $ 
	 Consequently, without loss of generality, we may restrict the competitors in the variational principle for~$\mu_{0,1},$ (namely,~\eqref{e.mu01}) to those functions~$f \in C \cap L^2(U_m)$, that satisfy in addition, that 
	 \begin{equation} \label{e.4times}
	 \|f\|_{L^\infty(U_m)} \leqslant 4\|\phi_{0,1}\|_{L^\infty(U_m)} \leqslant 4\la_{0,1}^{\frac{d}4}, \qquad 	\fint_{U_m} |\nabla f|^2 \leqslant 4 \cdot 3^{-2m} \la_{0,1}\,.
	 \end{equation}
 Before continuing with the argument, we record some basic estimates. Thanks to~\eqref{e.mu01}, for any admissible~$f,$ writing~$u_0 = T_0(f)$ we find 
 \begin{equation}
 	\label{e.basicest-1}
 	\fint_{U_m} T_0(f) f\,dx = \fint_{U_m} u_0 f\,dx \leqslant \mu_{0,1}\,,
 \end{equation}
so that, thanks to the ellipticity of~$\ahom,$ 
\begin{equation} \label{e.basicest-2}
	\|\nabla u_0\|_{\underline{L}^2(U_m)}^2 \leqslant C(d,\alpha) \mu_{0,1}\,,
\end{equation}
and thus
\begin{equation} \label{e.basicest-3}
	\|f\|_{\underline{H}^{-1}(U_m)} \leqslant C \|\nabla u_0\|_{\underline{L}^2 (U_m)} \leqslant C \sqrt{\mu_{0,1}}\,. 
\end{equation}
 Returning to the base case estimate, by the triangle inequality, we find 
	\begin{equation} \label{e.triangleineq-1}
		\begin{aligned}
			\mu_{0,1}  = \max \fint_{U_m} T_0(f) f\,dx &\leqslant \max \biggl[ \fint_{U_m} T_0(f) f\,dx - \frac{\avsum_{x \in \eta_*(U_m)} T_m(f) f }{\avsum_{x \in \eta_*(U_m)} f(x)^2}\biggr] \\
			&\qquad + \max \frac{\avsum_{x \in \eta_*(U_m)} T_m(f) f}{\avsum_{x \in \eta_*(U_m)} f(x)^2}\,.
		\end{aligned}
	\end{equation}
Lemma~\ref{l.montecarlo}, and its proof assert that if the dimension~$d \geqslant 3,$ then there is a universal constant~$C(d,U_0) > 0$ such that 
\begin{equation*}
\P \biggl[ \biggl| \avsum_{x \in \eta_*(U_m)} f(x)^2  - 1 \biggr| \geqslant 3^{-m} \sqrt{\la_{0,k}}\biggr] \leqslant  2 \exp \Bigl( -C(d,U_0) \bigl(3^m \la_{0,k}^{-\sfrac12}\bigr)^{d-2} \Bigr)\,,
\end{equation*}
whereas if~$d=2,$ then 
\begin{equation*}
		\P \biggl[ \biggl| \avsum_{x \in \eta_*(U_m)} f(x)^2  - 1 \biggr| \geqslant 3^{-m} \sqrt{m}\la_{0,k}^{\sfrac12}\biggr] \leqslant  2 \exp ( -C(U_0)m)\,.  
\end{equation*}
Therefore with probability exponentially close to~$1$ as indicated in the preceding two estimates, continuing from~\eqref{e.triangleineq-1}, we obtain 
\begin{equation} \label{e.onedirection}
	\mu_{0,1} - \mu_{m,1} \leqslant \max \biggl[ \fint_{U_m} T_0(f) f\,dx - \!\!\!\avsum_{x \in \eta_*(U_m)} T_m(f) f \biggr] + \mu_{m,1}3^{-m} \sqrt{\la_{0,1}} \bigl( \indc_{d \geqslant 3} + \sqrt{m} \indc_{d = 2}  \bigr)\,. 
\end{equation}
It is clear that the last term is~$\approx \sqrt{\mu_{0,1}},$ and so, we focus on the first, max term. For this term, setting~$u_m := T_m(f),$ and~$u_0 := T_0(f),$ we find 
\begin{equation} \label{e.task1.1}
	\begin{aligned}
		&\avsum_{x \in \eta_*(U_m)} u_m(x) f(x) - \fint_{U_m} T_0(f)(x) f(x)\,dx \\
		& \quad =  \avsum_{x \in \eta_*(U_m)}\bigl( u_m(x) - u_0(x) \bigr) f(x) + \avsum_{x \in \eta_*(U_m)} u_0(x) f(x)   - \fint_{U_m} u_0(x) f(x) \,dx \\
		&\quad =: \mathrm{I} + \mathrm{II}\,.
	\end{aligned}
\end{equation}
For the term~$\mathrm{I}$ we have,
	\begin{align}
	 \label{e.I1}
|\mathrm{I} |
= 
\biggl| \avsum_{x \in \eta_*(U_m)} \!\!\bigl( u_m(x) - u_0(x) \bigr) f(x)\biggr| 
& \leqslant \biggl( \avsum_{x \in \eta_*(U_m)} (u_m(x) - u_0(x))^2\biggr)^{\!\sfrac12 } \| f\|_{\underline{L}^2(\eta_*(U_m))}
\notag \\ &
\leqslant C 3^{-m}\|u_0\|_{\underline{L}^2(U_m)} \notag \\ &
\leqslant C\|\nabla u_0\|_{L^2(U_m)}  \leqslant C \sqrt{\mu_{0,1}}\,,
	\end{align}
where, we used the (continuum) Poincar\'e inequality in the last step. For term~$\mathrm{II},$ thanks to Lemma~\ref{l.MC2}, and~\eqref{e.basicest-3} and~\eqref{e.4times}, we find 
\begin{equation} \label{e.I2}
	\biggl| \avsum_{x \in \eta_*(U_m)} u_0(x) f(x)   - \fint_{U_m} u_0(x) f(x) \,dx \biggr| \leqslant C(1 + \sqrt{m}\indc_{d=2}) \sqrt{\mu_{0,1}}\,,  
\end{equation}
with probability exponentially close to~$1$ as quantified by Lemma~\ref{l.MC2}. Inserting~\eqref{e.task1.1},~\eqref{e.I1} and~\eqref{e.I2} into~\eqref{e.onedirection} yields
\begin{equation} \label{e.onedirection-final}
	\mu_{0,1} - \mu_{m,1} \leqslant C \sqrt{\mu_{0,1}}\,.
\end{equation}

\smallskip 

We turn to the opposite direction, still arguing towards the base case. To this goal we begin with the variational characterization of~$\mu_{m,1}.$ We have,
\begin{equation} \label{e.oppdir-1}
		\mu_{m,1} = \max_{f \in C \cap L^2(U_m): \|f\|_{\underline{L}^2(U_m) = 1}}  \frac{ \avsum_{x \in \eta_*(U_m)} T_m(f)(x) f(x) }{\avsum_{x \in \eta*(U_m)} f(x)^2}\,. 
\end{equation}
For technical reasons (we must estimate certain Monte-Carlo sums!) we introduce the auxiliary quantity 
\begin{equation} \label{e.oppdir-2}
	\mu_{m,1}^T :=  \max_{f \in \mathfrak{A}_T }  \frac{ \avsum_{x \in \eta_*(U_m)} T_m(f)(x) f(x) }{\avsum_{x \in \eta*(U_m)} f(x)^2}\,, 
\end{equation}
with 
\begin{equation} \label{e.ARdef}
	\mathfrak{A}_T := \{ f \in C \cap L^2(U_m): \|f\|_{\underline{L}^2(U_m) = 1}, \|f\|_{L^\infty(U_m)} \leqslant 4T \la_{0,1}^{\sfrac{d}4} \}\,.
\end{equation}
It is clear that the quantity~$\mu_{m,1}^T$ is independent of~$T$ for~$T$ sufficiently large (but possibly depending on~$m$, for now; we will show that we can take~$T$ independent of~$m$). In particular we notice that for each fixed~$m$ the quantity~$\mu_{m,1}^T$ is nondecreasing in~$T$ and satisfies
\begin{equation} \label{e.muT}
\max_T	\mu_{m,1}^T =  \mu_{m,1}\,. 
\end{equation}
We have, 
\begin{align*}
		\mu_{m,1}^T & \leqslant \max_{f \in \mathfrak{A}_T} \biggl[  \frac{ \avsum_{x \in \eta_*(U_m)} T_m(f)(x) f(x) }{\avsum_{x \in \eta*(U_m)} f(x)^2} - \fint_{U_m}T_0(f) f\,dx\biggr] \\ & \qquad
		+ \max_{f \in C \cap L^2(U_m): \|f\|_{\underline{L}^2(U_m) = 1}} \fint_{U_m}T_0(f) f\,dx\,,
 \end{align*} 
where we notice carefully, that the second~$\max$ on the right hand side of the preceding display can be made independent of~$T,$ and so, this maximum is, in fact, equal to~$\mu_{0,1}.$ This implies that 
\begin{equation*}
	\mu_{m,1}^T \leqslant \max_{f \in \mathfrak{A}_T} \biggl[  \frac{ \avsum_{x \in \eta_*(U_m)} T_m(f)(x) f(x) }{\avsum_{x \in \eta*(U_m)} f(x)^2} - \fint_{U_m}T_0(f) f\,dx\biggr] + \mu_{0,1}\,.
\end{equation*} 
But the arguments proving~\eqref{e.onedirection}--\eqref{e.I2} yield that we have 
\begin{equation*}
	\mu_{m,1}^T \leqslant C \sqrt{\mu_{0,1}} (1 + \sqrt{m}\indc_{d=2}) + \mu_{0,1}\,,
\end{equation*}
with probability at least~$1 - 2\exp \bigl( - \frac{C(d,U_0)(3^m \la_{0,1}^{-\sfrac12})^{d-2}}{T^2} \bigr)$ (if~$d \geqslant 3,$ and obvious modifications if~$d=2$).  As the right hand side is independent of~$T,$ we can maximize the left-hand side over~$T$ and use~\eqref{e.muT} to obtain 
\begin{equation} \label{e.otherdirection}
	\mu_{m,1} - \mu_{0,1} \leqslant C \sqrt{\mu_{0,1}} (1 + \sqrt{m}\indc_{d=2})\,.
\end{equation}
Combining~\eqref{e.onedirection-final} and~\eqref{e.otherdirection} we find that 
\begin{equation*}
	|\mu_{m,1} - \mu_{0,1}| \leqslant C \sqrt{\mu_{0,1}}(1 + \sqrt{m}\indc_{d=2})\,,
\end{equation*}
with probability at least~$1 -  2\exp \bigl( - \frac{C(d,U_0)(3^m \la_{0,1}^{-\sfrac12})^{d-2}}{T^2} \bigr)$ when~$d \geqslant 3$ and~$1 - 2\exp \bigl( - \frac{C(U_0) m}{T^2} \bigr)\,,$ when~$d=2.$ 
Since~$\mu_{m,1} = \frac{1}{\la_{m,1}},$ and~$\mu_{0,1} = \frac{3^{2m}}{\la_{0,1}},$ this yields that 
\begin{equation}
	\label{e.eigenvaluerates}
	|\la_{0,1} - 3^{2m}\la_{m,1}| \leqslant C 3^{-m} \la_{0,1}^{\sfrac32} \bigl( 1 + \sqrt{m}\indc_{d=2} \bigr)\,. 
\end{equation} 
This completes the proof of convergence rates of eigenvalues, \emph{provided we make a choice of~$T$ independent of~$m.$ }

\smallskip 
\emph{Convergence rates for the principal eigenvector.} We will show 
\begin{equation} \label{e.L2convrate-1}
	\|\phi_{m,k} - \phi_{0,k} \|_{\underline{L}^2(\eta_*(U_m))} \leqslant \frac{C}{\gamma(\la_{0,1})} 3^{-m} \la_{0,1}^{\sfrac32}  \bigl( 1 + \sqrt{m}\indc_{d=2} \bigr)\,,
\end{equation}
Here,~$\gamma(\la_{0,1})$ denotes the spectral gap of~$\la_{0,1},$ i.e., 
\begin{equation*}
	\gamma(\la_{0,1}) = |\la_{0,2} - \la_{0,1}| > 0. 
\end{equation*}
The proof of~$L^2$ convergence of eigenvectors relies on a two-scale convergence argument and follows closely the steps in~\cite[Section 5]{AV2}. Rather than repeat all the details, here we only offer a sketch of the proof, along with detailed pointers to relevant estimates in~\cite{AV2}. 
\begin{enumerate}
	\item We know that~$\phi_{m,1}$ solves~\eqref{e.discreteeig} corresponding to~$k=1,$ and~$\phi_{0,1}$ is the Dirichlet eigenfunction of~$-\nabla \cdot \ahom \nabla $ in~$U_m$ with eigenvalue~$\la_{0,1}.$ One constructs the two-scale approximation of~$\phi_{m,1}$ given by~$\phi_{0,1} + \sum_{i=1}^d \partial_{x_i} \phi_{0,1} \varphi_{e_i} (x),$ where~$\{\varphi_e:e\in \Rd\}$ denote the corrector fields constructed in~\cite[Section 7]{AV2}. 
	\item Defining
	\begin{equation*}
		w_{m,1} := \phi_{m,1} - \biggl(\phi_{0,1} + \sum_{i=1}^d \partial_{x_i} \phi_{0,1} \varphi_{e_i} (x) \biggr)\,,
	\end{equation*}
one computes 
\begin{multline}
\bigr(	\mathcal{L} - \la_{m,1} \bigl) w_{m,1} = - \mathcal{L}  \biggl(\phi_{0,1} + \sum_{i=1}^d \partial_{x_i} \phi_{0,1} \varphi_{e_i} (x) \biggr) + \nabla \cdot \ahom \nabla \phi_{0,1} \\
+ (3^{-2m}\la_{0,1} - \la_{m,1})\phi_{0,1} + \la_{m,1} \sum_{i=1}^d \partial_{x_i} \phi_{0,1} \varphi_{e_i}(x)\,. \label{e.ansatzeq}
\end{multline}
The details of this computation appear in~\cite[Lemmas 5.4 and 5.5]{AV2}. The desired estimate~\eqref{e.L2convrate-1} will follow by testing this equation with~$w_{m,1},$ provided we can control the right-hand side. 
\begin{enumerate}
	\item The first term is the two-scale expansion error term, and consists in measuring~$- \mathcal{L}  \biggl(\phi_{0,1} + \sum_{i=1}^d \partial_{x_i} \phi_{0,1} \varphi_{e_i} (x) \biggr) + \nabla \cdot \ahom \nabla \phi_{0,1}.$ The estimate for this difference in~$\underline{H}^{-1}(\eta_*(U_m))$ is contained in~\cite[Eq. (5.7)]{AV2}, and leads to the bound
	\begin{equation} \label{e.errorterm}
		C3^{-m} \sup_{|e|=1} \|\varphi_e\|_{\underline{L}^2(\eta_*(U_m))} \biggl(\avsum_{x \in \eta_*(U_m)} \sup_{y \in B_1(x)}|D\phi_{0,1}(y)|^2 + |D^2\phi_{0,1}(y)|^2 \biggr)^{\sfrac12}.
	\end{equation}
Optimal bounds on the correctors are in~\cite[Section 7, Eq (7.8)]{AV2}. They are essentially bounded, with an additional intrinsic~$\sqrt{m}$ in two dimensions. In order to control the last factor, we use \emph{interior regularity of the continuum eigenfunctions~$\phi_{0,1}.$ } Indeed, for a fixed center~$x$ in the graph we consider the change of variables~$z = \frac{y - x}{\sqrt{\la_{0,1}}},$ which changes the domain~$U_m$ into the ~$\frac{1}{\sqrt{\la_{0,1}}} (U_m - x).$ By~\eqref{e.epskcondition} this domain is ``still large'', and under this change of variables, defining~$\overline{\phi_{0,1}}(z) := \phi_{0,1}(\sqrt{\la_{0,1}} z + x)$ entails that~$\overline{\phi_{0,1}}(z)$ solves~\[-\nabla \cdot\ahom \nabla \overline{\phi_{0,1}}(z)= \overline{\phi_{0,1}}(z)\,.\]
As~$\ahom$ is constant, interior estimates then imply that the Hessian of~$\overline{\phi_{0,1}}$ can be controlled by its~$L^2$ norm (which is independent of~$\la_{0,1}$). Scaling back to the original variables, this implies that the last factor in~\eqref{e.errorterm} is controlled by~$\la_{0,1}^{\sfrac12}.$  Explicitly: writing~$\la \equiv \la_{0,1}$,
\begin{multline*}
	\|D^2\phi_{0,1}\|_{L^\infty (B_1)} = \la \|D^2\overline{\phi_{0,1}}\|_{L^\infty\bigl(B_{\la^{-\sfrac12}}\bigr)} \\ \leqslant C \la \|\overline{\phi_{0,1}}\|_{L^2\bigl(B_{2\la^{-\sfrac12}}\bigr)} = C \la^{1-\frac{d}4} \|\phi_{0,1}\|_{L^2(B_1)} \,. 
\end{multline*}
\item The second term~$|\la_{0,1} - \la_{m,1}|$ is controlled, thanks to~\eqref{e.eigenvaluerates} and Lemma~\ref{l.montecarlo}. 
\item Finally, in order to control the last term, we use that~$\la_{m,1} \approx 3^{-2m}\la_{0,1}$ by~\eqref{e.eigenvaluerates}, and estimate 
\begin{multline*}
\biggl| 3^{-m}	\avsum_{x \in \eta_*(U_m)} \partial_{x_i} \phi_{0,1}(x) \varphi_{e_i}(x) w_{m,1}(x) \biggr| \\
\leqslant 3^{-m}\|\varphi_{e_i}\|_{\underline{H}^{-1}(U_m)} \biggl( \avsum_{x, y \sim x \in \eta_*(U_m)} \bigl( (\partial_{x_i}  \phi_{0,1} w_{m,1})(x)  - (\partial_{x_i} \phi_{0,1} w_{m,1})(y)\bigr)^2\biggr)^{\sfrac12}\,. 
\end{multline*}
For the corrector term~$3^{-m}\|\varphi_{e_i}\|_{\underline{H}^{-1}(U_m)}$ we use the optimal corrector estimates from~\cite[Theorem 1.2, Eq (1.6)]{AV2} with the choice~$\mathfrak{s} = 1$ in the notation there, we find that this factor is of size~$3^{-m}.$ For the other factor, using the discrete product rule in the form 
\begin{equation*}
	(fg)(x) - (fg)(y) = \frac12 (f(x) + f(y)) (g(x) - g(y)) + \frac12 (g(x) + g(y)) (f(x) - f(y))\,,
\end{equation*}
followed by Cauchy-Schwarz, and repeating the arguments from (a) to estimate~\eqref{e.errorterm} completes the desired estimate. 
\end{enumerate}
This completes the argument for~\eqref{e.L2convrate-1}. 
\end{enumerate}

Finally, we prove the~$L^\infty$ convergence rates for the eigenfunctions, which will enable selecting the parameter~$T$ in~\eqref{e.ARdef}. 

\smallskip  
Recall from~\eqref{e.ansatzeq} that the function~$w_{m,1}$ above solves an equation of the form~$\bigl(\mathcal{L} - \la_{m,1} \bigr) w_{m,1} = f.$  Now, let~$y \in U_0$ be such that~$x := 3^m y \in \eta_*(U_m),$ and let
\begin{equation*}
R := \frac12 3^m \mathrm{dist}(y, \partial U_0)\,.
\end{equation*}
 By the large-scale regularity theorem~\cite[Theorem 1.1]{AV2}, there exists~$C(d,\la) < \infty$ and~$s(d) > 0$ and a nonnegative random variable~$\X$ satisfying 
\begin{equation*}
	\X = \O_s(C)
\end{equation*}
such that if~$R \geqslant \X,$ then for any~$r \in [\X, \frac12 R]$ we have the estimate 
\begin{multline}
	\label{e.LSR-eigfct}
\sup_{t \in [r,\frac12R]} \|\nabla w_{m,1}\|_{\underline{L}^2(\eta_*(B_t(x))} \leqslant \frac{C}{\mathrm{dist}(y,\partial U_0)} 3^{-m}  \| w_{m,1} - (w_{m,1})_{B_R(x)} \|_{\underline{L}^2(\eta_*(B_R(x)))} \\ 
+ C \int_r^R \|f\|_{\underline{H}^{-1}(\eta_*(B_t(x)))} \frac{\,dt }{t}\,.
\end{multline}
We must simplify the last term of~\eqref{e.LSR-eigfct}, which requires estimating~$\|f\|_{\underline{H}^{-1}(\eta_*(B_t))}.$ By the triangle inequality applied to the right-hand side in~\eqref{e.ansatzeq}, we have 
\begin{multline*}
		\|f\|_{\underline{H}^{-1} (\eta_*(B_t))} \leqslant \biggl\| - \mathcal{L}  \biggl(\phi_{0,1} + \sum_{i=1}^d \partial_{x_i} \phi_{0,1} \varphi_{e_i} (x) \biggr) + \nabla \cdot \ahom \nabla \phi_{0,1} \biggr\|_{\underline{H}^{-1}(\eta_*(B_t))} \\
		+|3^{-2m} \la_{0,1} - \la_{m,1} \|\phi_{0,1} \|_{\underline{H}^{-1}(\eta_*(B_t))} + \la_{m,1} \sum_{i=1}^d\biggl\|  \partial_{x_i} \phi_{0,1} \varphi_{e_i} \biggr\|_{\underline{H}^{-1}(\eta_*(B_t))} = : A + B + C\,.
\end{multline*}
We start with estimating~$B$. By using the definition of the discrete~$H^{-1}$ norm, then the Poincar\'e inequality (see \cite[Lemma 2.3]{AV2}) and Lemma~\ref{l.moseriteration} we obtain by Cauchy-Schwarz that
\begin{multline*}
	\| \phi_{0,1}\|_{\underline{H}^{-1}(\eta_*(B_t))} = \sup_{g \in H^1_0(\eta_*(B_t)) : \|g\|_{\underline{H}^1(\eta_*(B_t))} \leqslant 1} \avsum_{x \in \eta_*(B_t)} \phi_{0,1}(x) g(x) \\
	\leqslant C \la_{0,1}^{\sfrac{d}4} \sup_g \biggl( \avsum_{x \in \eta_*(B_t)} g(x)^2\biggr)^{\sfrac12} \leqslant C \la_{0,1}^{\sfrac{d}4} t \,.
\end{multline*}
Therefore, 
\begin{equation}
	\label{e.estforB}
	|3^{-2m} \la_{0,1} - \la_{m,1}| \|\phi_{0,1}\|_{\underline{H}^{-1}(\eta_*(B_t))} \leqslant C3^{-3m}\la_{0,1}^{\sfrac32+\sfrac{d}4}(1 + \sqrt{m}\indc_{d=2})t\,. 
\end{equation}
Turning to the term~$C$ we have, for each~$i \in \{1,\cdots, d\},$ thanks to Poincar\`e inequality, and~\cite[Theorem 1.2]{AV2} on the optimal sizes of the correctors, 
\begin{equation} \label{e.C}
	\begin{aligned}
\la_{m,1}	\biggl\|  \partial_{x_i} \phi_{0,1}\varphi_{e_i}(x)\biggr\|_{\underline{H}^{-1}(\eta_*(B_t))} &\leqslant 2 \cdot 3^{-2m} \la_{0,1} \sup_g \avsum_{x \in \eta_*(B_t)} \partial_{x_i} \phi_{0,1} \varphi_{e_i}(x) g(x) \\
&\leqslant  2 \cdot 3^{-2m} \la_{0,1}  \|\varphi_{e_i}\|_{\underline{L}^{2}(\eta_*(B_t))} \sup_g \| \partial_{x_i} \phi_{0,1} g \|_{\underline{L}^2(\eta_*(B_t))}\\
&\leqslant C 3^{-2m} (\indc_{d \geqslant 3} + \sqrt{m} \indc_{d=2}) \la_{0,1}^{\frac{d}4 + \frac32} t\,.
	\end{aligned}
\end{equation}
Finally we estimate term~$A.$ For this term, arguing as in~\cite[Lemmas 5.4--5.5]{AV2} and the optimal bounds for correctors from~\cite[Theorem 1.2]{AV2} we obtain 
\begin{equation} \label{e.A}
		t^{-1}A \leqslant C (1 + \sqrt{m}\indc_{d=2}) 3^{-2m} \la_{0,1}^{\sfrac32}\,. 
\end{equation}
It follows by plugging in~\eqref{e.estforB},\eqref{e.C}, and \eqref{e.A} into~\eqref{e.LSR-eigfct} that 
\begin{equation} \label{e.LSR-final}
	\begin{aligned}
\sup_{t \in [r,\frac12 R]} \| \nabla w_{m,1}\|_{\underline{L}^2(\eta_*(B_t(x)))} &\leqslant \frac{C}{\mathrm{dist}(y,\partial U_0)} 3^{-m} \| w_{m,1}  - (w_{m,1})\|_{\underline{L}^2(\eta_*(B_R(x)))} \\
&\quad + C(1 + \sqrt{m}\indc_{d=2})  \la_{0,1}^{\sfrac32 + \sfrac{d}4}3^{-m}\\
&\leqslant \frac{C}{\mathrm{dist}(y,\partial U_0)} (1 +  \sqrt{m} \indc_{d=2})\la_{0,1}^{\sfrac{d}{4} + \sfrac{3}{2}}  3^{-m}\,.
	\end{aligned}
\end{equation}
Since any~$y,z \in \eta_*\bigl(B_{\frac12 \X}(x)\bigr)$ can be joined by a path with no more than~$C |B_{\X} (x)| = C \X^{d}$ edges, by discreteness, the triangle inequality, and Cauchy-Schwarz we find
\begin{align*}
	|w_{m,1}(y) - w_{m,1}(z)| & \leqslant 
C \X^{\sfrac{d}2} \|\nabla w_{m,1}\|_{\underline{L}^2(\eta_*(B_t))} 	\\ & 
	 \leqslant \frac{C}{\mathrm{dist}(y,U_0)}\X^{\sfrac{d}2} (1 +  \sqrt{m} \indc_{d=2}) \la_{0,1}^{\sfrac{d}{4} + \sfrac{3}{2}} 3^{-m}\,.
\end{align*}
As~$\X^{\sfrac{d}2} \leqslant \O_s(C),$ the proof of assertion~\eqref{e.C01} is complete for the case~$k=1$. 
Finally in order to get an~$L^\infty$ bound, we note that for any~$\zeta = 3^m y \in \eta_*(U_m), y \in U_0,$ by the triangle inequality, Poincar\'{e} inequality on the ball~$B_{\sfrac12\X(\zeta)}(\zeta)$ and estimates~\eqref{e.LSR-final}, and~\eqref{e.L2convrate-1}, we obtain 
\begin{equation*}
	\begin{aligned}
	w_{m,1}^2(\zeta) &\leqslant C\X^2(\zeta) \avsum_{x \in B_{\frac12 \X(\zeta)}} w_{m,1}^2(x) \\
&\leqslant C \X^4 (\zeta) \biggl\{ \avsum_{x \in B_{\frac12 \X(\zeta)}}\bigl( w_{m,1} {-} (w_{m,1})_{B_{\frac12 \X(\zeta)}}\bigr)^2 \biggr\} + C \X^2(\zeta) \bigl| (w_{m,1})_{B_{\frac12 \X(\zeta)}} {-} (w_{m,1})_{B_R(\zeta)} \bigr|^2 \\
&\quad \quad + C\X^2(\zeta) \|w_{m,1}\|_{\underline{L}^2(B_R(\zeta))}^2\\	
&\leqslant C \X^4(\zeta) (1 + \sqrt{m}\indc_{d=2}) \la_{0,1}^{\sfrac{d}2+ 3} 3^{-2m} \,. 
	\end{aligned}
\end{equation*}
This implies assertion~\eqref{e.Loo} for~$k=1.$ 
The upshot of the foregoing analysis and Lemma~\ref{l.moseriteration} is that for all~$m$ so that~$3^m\geqslant \X$ for which~\eqref{e.epskcondition} holds, we have  
\begin{equation*}
	\|\phi_{m,1}\|_{L^\infty(\eta_*(U_m))} \leqslant \|w_{m,1}\|_{L^\infty(\eta_*(U_m))} +  \|\phi_{0,1}\|_{L^\infty(U_m)} \leqslant 2 C_0(d) \la_{0,1}^{\frac{d}4}(1 + \la_{0,1})\,. 
\end{equation*}
It follows that in~\eqref{e.ARdef} we may choose~$T := 2C_0(d)(1 + \la_{0,1}) $. Having fixed the choice of~$T$ independently of~$m,$ the proof of the convergence rates of eigenvalues and eigenvectors hold, completing the proof of the base case. 

\bigskip 

\emph{Step 2.} \emph{Induction hypothesis:}
 Suppose that for some~$K \in \N$ with~$\la_{0,K}$ verifying~\eqref{e.epskcondition} we have shown convergence rates for eigenvalues~$\{\la_{m,j}\}_{j=1}^{K-1}$ and corresponding rates for eigenfunctions in~$L^2$ and~$L^\infty:$
 \begin{equation*}
 	\begin{aligned}
 	|\la_{0,j} - 3^{2m} \la_{m,j}| &\leqslant C 3^{-m} \la_{0,j}^{\sfrac32} \bigl( 1 + \sqrt{m}\indc_{d=2}\bigr)\\
 	\|\phi_{m,j} - \phi_{0,j} \|_{\underline{L}^2(\eta_*(U_m))} &\leqslant C3^{-m}\la_{0,j}^{\sfrac32} \frac{1}{\gamma(\la_{0,j})} \bigl( 1 + \sqrt{m}\indc_{d=2}\bigr)\,.
 	\end{aligned}
 \end{equation*}  
where
\begin{equation*}
	\gamma(\la_{0,j}) := \min\{|\la_{0,j} - \la_{0,\ell}| : \la_{0,\ell} \neq \la_{0,j}\}\,. 
\end{equation*}

\bigskip 
\emph{Step 3.} \emph{Induction step:}
To conclude the proof, we must show the rates for the~$K$th eigenvalue and eigenvector. In fact, once we have shown the convergence rates for eigenvalues, the argument for eigenvectors (in~$L^2$ and~$L^\infty$) proceed as in Step 1 with obvious notational changes. Therefore, we focus on proving the convergence rates for eigenvalues, and as in Step 1, prove that with high probability,
\[
|\mu_{m,K} - \mu_{0,K}| \leqslant C \sqrt{\mu_{0,K}}\,.
\]
We have,
\begin{equation} \label{e.mu0K}
	\mu_{0,K} = \min_{\begin{subarray}
		     	\, \Sigma \subset L^2(U_m) ,\\
			\mathrm{dim}\, \Sigma = K-1
		\end{subarray} } \max_{\begin{subarray}
	\,\,\,	f \in C \cap L^2(U_m),\\
		\|f\|_{\underline{L}^2(U_m)} = 1, f \perp \Sigma
	\end{subarray} } \fint_{U_m} T_0(f) f\,dx\,.
\end{equation}
Now we claim (\emph{using the induction hypothesis!}) that for each~$f \in C \cap L^2(U_m),$ there exists~$g \in C \cap L^2(U_m),$ with
\begin{equation} \label{e.indhyprate}
	\|g - f\|_{\underline{L}^2(\eta_*(U_m))} \leqslant C3^{-m}\max_{i\in \{1,\cdots, K-1\}}\frac{ \la_{0,i}^{\sfrac32}}{\gamma(\la_{0,i})} \underbrace{\leqslant}_{\eqref{e.evratesL2}} C\,, 
\end{equation}
which satisfies
\begin{multline} \label{e.iff}
		\avsum_{x \in \eta_*(U_m)} f(x) \phi_{m,i}(x) = 0  \ \ \forall i \in \{1,\cdots, K-1\}\ \ \\
	 \iff  \fint_{U_m} g(x) \phi_{0,i}(x) \,dx = 0 \ \ \forall i \in \{1,\cdots, K-1\}\,. 
\end{multline}
Granting the claim in~\eqref{e.indhyprate}--\eqref{e.iff} for the moment, and letting~$\Sigma_K \subset L^2(U_m)$ denote the span of the functions~$\{\phi_{0,i} : i \in \{1,\cdots, K-1\}\},$ we continue from~\eqref{e.mu0K} to obtain 
\begin{equation} \label{e.varpring0K}
\mu_{0,K} = \max_{\begin{subarray}
		\,\,\,	g \in C \cap L^2(U_m),\\
		\|g\|_{\underline{L}^2(U_m)} = 1, g \perp \Sigma_K
\end{subarray} } \fint_{U_m} T_0(g) g\,dx\,.
\end{equation}
The optimizer of this variational problem exists and is unique, and is achieved by~$g = \phi_{0,K}.$ Therefore, in the variational principle~\eqref{e.varpring0K} we may restrict the competitors to additionally satisfy (thanks to Proposition~\ref{l.moseriteration}):
\begin{equation}
	\label{e.additionally}
	\|g \|_{L^\infty(U_m)} \leqslant 4 \|\phi_{0,K}\|_{L^\infty(U_m)} \leqslant 4C_0(d,U_0)  3^{-2m} \la_{0,K}^{\sfrac{d}4}\,.
\end{equation}
We continue to estimate
\begin{equation} 
	\begin{aligned}
		&\mu_{0,K} = \max_{\begin{subarray}
				\,\,\,	g \in C \cap L^2(U_m),\\
				\|g\|_{\underline{L}^2(U_m)} = 1, g \perp \Sigma_K\\
				g \mbox{ \scriptsize verifies } \eqref{e.additionally}
		\end{subarray} } \fint_{U_m} T_0(g) g\,dx\\
	&\quad \quad  \leqslant \max_{\begin{subarray}
		\,\,\,	g \in C \cap L^2(U_m),\\
		\|g\|_{\underline{L}^2(U_m)} = 1, g \perp \Sigma_K\\
		g \mbox{ \scriptsize verifies } \eqref{e.additionally}
	\end{subarray} } \biggl\{\fint_{U_m} T_0(g) g\,dx - \avsum_{x \in \eta_*(U_m)} T_m(g) g \biggr\} \\
&\quad \quad \quad + \max_{\begin{subarray}
		\,\,\,	g \in C \cap L^2(U_m),\\
		\|g\|_{\underline{L}^2(U_m)} = 1, g \perp \Sigma_K\\
		g \mbox{ \scriptsize verifies } \eqref{e.additionally}
\end{subarray} } \avsum_{x \in \eta_*(U_m)} T_m(g) g\,,
	\end{aligned}\label{e.onewayindstep}
\end{equation}
The last term is essentially~$\mu_{m,K}.$ To see this, first we note that for \emph{any} admissible~$g,$ and corresponding~$f$ satisfying~\eqref{e.indhyprate}--\eqref{e.iff}, by the Poincar\'{e} inequality we have
\begin{multline}
	\biggl| \avsum_{x \in \eta_*(U_m)} T_m(f) f -  T_m(g)g \biggr| \leqslant 
	\|T_m (f) - T_m(g)\|_{\underline{H}^1(\eta_*(U_m))} \|f\|_{\underline{H}^{-1}(\eta_*(U_m))} \\
	+ \|T_m(g)\|_{\underline{H}^1(\eta_*(U_m))} \|f - g\|_{\underline{H}^{-1}(\eta_*(U_m))}
	\leqslant C\sqrt{\mu_{m,K}}\,,
\end{multline}
where in the last inequality we used the discrete versions of~\eqref{e.basicest-1}--\eqref{e.basicest-3}, together with the Poincar\'e inequality. Therefore it follows that 
\begin{equation}
\max_{\begin{subarray}
		\,\,\,	g \in C \cap L^2(U_m),\\
		\|g\|_{\underline{L}^2(U_m)} = 1, g \perp \Sigma_K\\
		g \mbox{ \scriptsize verifies } \eqref{e.additionally}
\end{subarray} } \avsum_{x \in \eta_*(U_m)} T_m(g) g \leqslant \mu_{m,K}+ C\sqrt{\mu_{m,K}}\,. 
\end{equation}
Rearranging, 
\begin{equation} \label{e.muKstep}
	\mu_{0,K} - \mu_{m,K} \leqslant  \max_{\begin{subarray}
			\,\,\,	f \in C \cap L^2(U_m),\\
			\|f\|_{\underline{L}^2(U_m)} = 1, f \perp \Sigma_K
	\end{subarray} } \biggl\{\fint_{U_m} T_0(f) f\,dx - \avsum_{x \in \eta_*(U_m)} T_m(f) f \biggr\} +C \sqrt{\mu_{m,K}}\,.
\end{equation}
The proof of the opposite direction is similar, and the remainder of the argument proceeds exactly as in the base case with minor changes. The proofs of Theorems~\ref{t.main-spec} and~\ref{c.LSR} are now complete. 
\end{proof}

\appendix
\section{Some auxiliary estimates used in the proof} \label{s.appendix}

 \begin{lemma}
 	\label{l.moseriteration}
 Let~$U_0 \subset \cu_0$ be a bounded, Lipschitz domain. There exists a constant~$C_0(d) > 0$ such that every solution of
\begin{equation*}
	\begin{cases}
		- \nabla \cdot \ahom \nabla \phi &= \la \phi \quad \mbox{ in } U_0\\
		\phi &= 0 \quad \mbox{ on } \partial U_0\,.
	\end{cases}
\end{equation*}
satisfies the estimate
\begin{equation*}
	\|\phi\|_{L^\infty(U_0)} \leqslant C_0(d) \la^{\sfrac{d}4} \|\phi\|_{L^2(U_0)}\,. 
\end{equation*}
 \end{lemma}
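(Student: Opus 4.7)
The plan is to reduce the claim to the classical ultracontractivity of the Dirichlet heat semigroup associated with the constant-coefficient divergence-form operator $L := -\nabla \cdot \ahom \nabla$ on $U_0$. The basic observation is the tautology $\phi = e^{\la t}\, e^{-tL}\phi$ for every $t > 0$, so any $L^2(U_0) \to L^\infty(U_0)$ bound on the semigroup translates directly into a comparable $L^\infty$ bound on the eigenfunction, with a single factor $e^{\la t}$ to optimize over.

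First, I would establish the ultracontractivity bound
\begin{equation*}
    \|e^{-tL}\|_{L^2(U_0) \to L^\infty(U_0)} \leqslant C_*(d)\, t^{-\sfrac{d}{4}}, \qquad t \in (0,1],
\end{equation*}
where $C_*(d)$ depends only on $d$ and on the ellipticity of $\ahom$. This is classical: it follows from Nash's inequality
\begin{equation*}
    \|u\|_{L^2(\Rd)}^{2+\sfrac{4}{d}} \leqslant C \|u\|_{L^1(\Rd)}^{\sfrac{4}{d}} \|\nabla u\|_{L^2(\Rd)}^2,
\end{equation*}
applied to $u \in H^1_0(U_0)$ extended by zero, via the standard Varopoulos--Fabes--Stroock semigroup argument that upgrades such a Nash inequality to the diagonal heat kernel bound $\sup_{x,y \in U_0} p_t(x,y) \leqslant C t^{-\sfrac{d}{2}}$, and then to the $L^2 \to L^\infty$ mapping property by Cauchy--Schwarz. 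Since $\ahom$ is a fixed positive definite matrix, this reduction can, if preferred, be made literal by the linear change of coordinates $x \mapsto \ahom^{-\sfrac{1}{2}} x$, which converts $L$ into the Dirichlet Laplacian on a transformed Lipschitz domain where the bound is textbook.

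Then I would apply this ultracontractivity to $\phi$ via $\phi = e^{\la t}\, e^{-tL}\phi$ to obtain
\begin{equation*}
    \|\phi\|_{L^\infty(U_0)} \leqslant e^{\la t}\, C_*(d)\, t^{-\sfrac{d}{4}} \|\phi\|_{L^2(U_0)}, \qquad t \in (0,1],
\end{equation*}
and optimize in $t$. Setting $t := \min(1, 1/\la)$ and distinguishing the trivial case $\la \leqslant 1$ from the case $\la > 1$ (where $t = 1/\la$ and $e^{\la t} = e$) delivers the conclusion with $C_0(d) := e\, C_*(d)$.

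The only nontrivial ingredient is the Nash/Varopoulos step feeding the ultracontractive estimate. The main obstacle if one preferred a purely elementary route would be to recover the sharp $\la^{\sfrac{d}{4}}$ exponent by a direct Moser iteration on the eigenvalue equation: testing with $|\phi|^{2p-2}\phi$ for $p \geqslant 1$, combining with the Sobolev inequality of exponent $\chi := d/(d-2)$ (when $d \geqslant 3$), and iterating with $p_k := \chi^k$ yields the reverse-H\"older recursion $\|\phi\|_{L^{2\chi p}} \leqslant (Cp\la)^{\sfrac{1}{2p}} \|\phi\|_{L^{2p}}$; summing the resulting geometric series $\sum_k \chi^{-k} = \chi/(\chi-1) = d/2$ concentrates the $\la$-dependence into the factor $\la^{\sfrac{d}{4}}$. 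The case $d=2$ would then require a separate Gagliardo--Nirenberg interpolation $\|\phi\|_{L^\infty} \leqslant C \|\phi\|_{L^2}^{\sfrac{1}{2}} \|D^2\phi\|_{L^2}^{\sfrac{1}{2}}$, combined with the elliptic regularity estimate $\|D^2\phi\|_{L^2(U_0)} \leqslant C\la \|\phi\|_{L^2(U_0)}$ for the constant-coefficient operator on the convex or $C^{1,1}$ domain $U_0$, to attain the endpoint exponent $\la^{\sfrac{1}{2}} = \la^{\sfrac{d}{4}}$.
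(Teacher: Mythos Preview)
Your argument is correct. The paper itself does not give a proof at all; it simply cites \cite[Proposition~5]{MF}. Given the lemma's label (\texttt{l.moseriteration}) the intended background is presumably the De Giorgi--Nash--Moser iteration you sketch as your second alternative, so your primary route via Dirichlet heat-kernel ultracontractivity is a genuinely different---and arguably cleaner---way to the same sharp exponent~$\la^{d/4}$.

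Two small remarks. First, the restriction $t\in(0,1]$ is unnecessary: the bound $\|e^{-tL}\|_{L^2\to L^\infty}\leqslant C_*(d)\,t^{-d/4}$ holds for all $t>0$ (the Dirichlet heat kernel on $U_0$ is pointwise dominated by the Gaussian kernel on~$\Rd$), so you may simply take $t=1/\la$ throughout and avoid the case split. Second, your ``trivial case $\la\leqslant 1$'' is trivial only because of the hypothesis $U_0\subset\cu_0$, which forces $\la\geqslant\la_1(\cu_0)>0$ by domain monotonicity; the factor $\la^{d/4}$ is then bounded below and can be absorbed into the constant. It is worth making that dependence explicit, since otherwise the statement with $C_0$ depending only on~$d$ (and the ellipticity of~$\ahom$) would fail by scaling.
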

\begin{proof}
	See~\cite[Proposition 5]{MF}.
\end{proof}
\begin{lemma}
	\label{l.montecarlo}
	Let~$k \in \N$ satisfy~\eqref{e.epskcondition}. Then, there exists a universal constant~$C(d,U_0)< \infty$ such that if~$d \geqslant 3,$ then 
	\begin{equation} \label{e.MCdbig3}
		\P \biggl[ \biggl| \avsum_{x \in \eta_*(U_m)} \phi_{0,k}(x)^2  - 1 \biggr| \geqslant 3^{-m} \sqrt{\la_{0,k}}\biggr] \leqslant  2 \exp \Bigl( -C(d,U_0) \bigl(3^m \la_{0,k}^{-\sfrac12}\bigr)^{d-2} \Bigr)\,.  
	\end{equation}
Instead, if~$d=2,$ then there exists a constant~$C(U_0) <\infty$ 
	\begin{equation} \label{e.MCd2}
	\P \biggl[ \biggl| \avsum_{x \in \eta_*(U_m)} \phi_{0,k}(x)^2  - 1 \biggr| \geqslant 3^{-m} \sqrt{m}\la_{0,k}^{\sfrac12}\biggr] \leqslant  2 \exp ( -C(U_0)m)\,.  
\end{equation}
\end{lemma}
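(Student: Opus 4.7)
The strategy is to apply a Bernstein-type concentration inequality for Poisson functionals. Set $g := \phi_{0,k}^2 - 1$; the normalization $\fint_{U_m}\phi_{0,k}^2 = 1$ gives $\int_{U_m} g\,dx = 0$. Since
\[
\avsum_{x\in\eta_*(U_m)} \phi_{0,k}^2(x) - 1 \;=\; \frac{1}{|\eta_*(U_m)|}\sum_{x\in\eta_*(U_m)} g(x),
\]
it suffices to prove a matching concentration estimate for the sum $S := \sum_{x\in\eta_*(U_m)} g(x)$, because $|\eta_*(U_m)|$ concentrates around $\alpha_*|U_m|$ (with $\alpha_*$ the Palm intensity of the infinite cluster) much more sharply than the desired bound, using \cite[Proposition~2.2]{AV2}.

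The core ingredient is the $L^\infty$ bound $\|\phi_{0,k}\|_{L^\infty(U_m)} \leqslant C\la_{0,k}^{d/4}$, which follows from Lemma~\ref{l.moseriteration} after rescaling $U_m$ back to $U_0$. This yields $\|g\|_{L^\infty} \leqslant C\la_{0,k}^{d/2}$ and
\[
\var(S) \;\leqslant\; C\alpha\int_{U_m} g^2\,dx \;\leqslant\; C\|\phi_{0,k}\|_{L^\infty}^2\int_{U_m}\phi_{0,k}^2\,dx \;\leqslant\; C\la_{0,k}^{d/2}|U_m|.
\]
By Bernstein's inequality for Poisson sums, $\P[|S| > t] \leqslant 2\exp\bigl(-c\min\bigl(t^2/\var(S),\ t/\|g\|_{L^\infty}\bigr)\bigr)$. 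Choosing $t = c\alpha_*|U_m|\cdot 3^{-m}\sqrt{\la_{0,k}}$ in $d \geqslant 3$ gives
\[
\frac{t^2}{\var(S)} \;\gtrsim\; \frac{|U_m|^2\cdot 3^{-2m}\la_{0,k}}{\la_{0,k}^{d/2}|U_m|} \;=\; \bigl(3^m\la_{0,k}^{-1/2}\bigr)^{d-2},
\]
which matches the exponent in \eqref{e.MCdbig3}. The hypothesis \eqref{e.epskcondition} ensures $t\|g\|_{L^\infty} \lesssim \var(S)$, placing us in the sub-Gaussian tail of the Bernstein minimum. In $d=2$, $|U_m| = 3^{2m}$, and the same calculation with the sharpened threshold $3^{-m}\sqrt{m\la_{0,k}}$ produces $t^2/\var(S) \gtrsim m$, giving \eqref{e.MCd2}.

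The main obstacle will be the technical passage between the true Poisson sum on $\eta\cap U_m$ and the restricted sum on $\eta_*(U_m)$: as defined in \cite{AV2}, $\eta_*(U_m)$ is the largest component in $U_m$ of the well-connected enclosing triadic cube, not merely the infinite cluster restricted to $U_m$, so one must absorb both boundary discrepancies and the finite-cluster points omitted from $\eta_*$. Both corrections are controlled by the exponentially small failure probabilities for the well-connected cube event in \cite[Proposition~2.2]{AV2}, combined with $\|\phi_{0,k}\|_{L^\infty}^2 \leqslant C\la_{0,k}^{d/2}$, and they are subleading relative to the Bernstein tail derived above.
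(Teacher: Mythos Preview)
Your proposal is correct and follows essentially the same route as the paper: both arguments rest on the $L^\infty$ eigenfunction bound $\|\phi_{0,k}\|_{L^\infty}\leqslant C\la_{0,k}^{d/4}$ from Lemma~\ref{l.moseriteration} combined with a Bernstein-type concentration inequality, and the exponent computation you carry out matches the paper's exactly. The only cosmetic difference is that the paper applies Bernstein directly to the empirical average (treating the cluster points as an i.i.d.\ sample and glossing over the $\eta$ versus $\eta_*$ distinction), whereas you work with the unnormalized Poisson sum, separate off the denominator $|\eta_*(U_m)|$, and explicitly flag the passage from $\eta\cap U_m$ to $\eta_*(U_m)$ as a technical correction---your version is arguably more honest about that step, but the underlying idea is identical.
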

\begin{proof}
	Defining~$\psi_{0,k} \in L^2(U_0)$ by~$\psi_{0,k}(y):= \phi_{0,k}(3^m y),$
	\begin{equation*}
		I_{m} (\phi_{0,k}) := \avsum_{x \in \eta_*(U_m)} \phi_{0,k}(x)^2 = \avsum_{y \in 3^{-m} \eta_*(U_m) } \psi_{0,k}^2(y)\,,
	\end{equation*}
and 
\begin{equation*}
	I(\phi_{0,k}) := \fint_{U_m} \phi_{0,k}^2(x)\,dx = 1\,.
\end{equation*}
Obviously,~$\psi_{0,k}$ is a Dirichlet eigenfunction of~$-\nabla \cdot \ahom\nabla$ on the domain~$U_0$ with eigenvalue~$\la_{0,k}$. It is clear that~$I_m$ is a sum of~$\P-$independent, identically distributed random variables~$\{Z_y := \psi_{0,k}(y)^2: y \in 3^{-m}\eta_*(U_m)\}$. Thus Bernstein's inequality implies that for any~$t > 0$ we have
\begin{equation} \label{e.bernstein}
	\P\biggl[ |I_m(\phi_{0,k}) - 1| \geqslant t\biggr] \leqslant 2 \exp \biggl( - \frac{|\eta_*(U_m)|t^2}{2\sigma^2 } \biggr)\,,
\end{equation}
Here,~$\sigma^2$ is the variance of the random variables~$\{Z_y\},$ and computes as 
\begin{equation*}
	\sigma^2 = \fint_{U_0}  \psi_{0,k}^4(y)\,dy - \biggl(\fint_{U_0} \psi_{0,k}^2(y)\,dy\biggr)^2 \leqslant \|\psi_{0,k}\|_{L^\infty(U_0)}^2  - 1\,. 
\end{equation*}
Applying a Moser iteration, and using the PDE for~$\psi_{0,k},$ (see for example~\cite[Proposition 5]{MF}) yields that there is a dimensional constant~$C(d) > 0$ such that
\begin{equation*}
	\|\psi_{0,k}\|_{L^\infty(U_0)} \leqslant C(d) \la_{0,k}^{\frac{d}4} \|\psi_{0,k}\|_{\underline{L}^2(U_0)} \leqslant C(d) \la_{0,k}^{\frac{d}4}\,. 
\end{equation*}
Inserting this, and using the choice~$t := 3^{-m} \sqrt{\la_{0,k}}$ if~$d \geqslant 3,$ and~$t := 3^{-m}\sqrt{m} \la_{0,k}^{\sfrac12}$ if~$d=2$ yields the desired estimate. 
\end{proof}
For the next lemma, we let~$f \in C \cap L^2(U_m)$ with~$\|f\|_{\underline{L}^2(U_m)} = 1.$ We let~$u_0$ denote the solution to 
\begin{equation*}
	\begin{cases}
		-\nabla \cdot \ahom \nabla u_0 &= f \quad \mbox{ in } U_m\\
		u_0 &= 0 \quad \mbox{ on } \partial U_m\,,
	\end{cases}
\end{equation*}
so that we have~$u_0(x) = T_0(f).$ During the course of our arguments for the proof of convergence rates we had to estimate the convergence rate of the Monte-Carlo approximation of the integral 
\begin{equation*}
\biggl|	\avsum_{x \in \eta_*(U_m)} u_0(x) f(x) - \fint_{U_m} u_0(x) f(x)\,dx \biggr|\,.
\end{equation*}
The next lemma provides the desired convergence rate. 
\begin{lemma}
	\label{l.MC2}
There exists a constant~$C(U_0,d) < \infty$ such that the following holds: if the dimension~$d \geqslant 3,$ then
\begin{multline}
	\P \biggl[ \biggl|	\avsum_{x \in \eta_*(U_m)} u_0(x) f(x) - \fint_{U_m} u_0(x) f(x)\,dx \biggr| > \|f\|_{\underline{H}^{-1}(U_m)} \biggr] \\
	 \leqslant 2 \exp \biggl( -  \frac{3^{m(d-2)} \bigl( 3^{-m}\|f\|_{\underline{H}^{-1}(U_m)})^2}{C(U_0,d) \|f\|^2_{L^\infty(U_m)}}\biggr)\,.  
\end{multline}
Instead, if~$d = 2$ then 
\begin{multline} 
	\P \biggl[3^{-2m}\biggl|	\avsum_{x \in \eta_*(U_m)} u_0(x) f(x) - \fint_{U_m} u_0(x) f(x)\,dx \biggr| > \sqrt{m}\|f\|_{\underline{H}^{-1}(U_m)} \biggr] \\
	\leqslant 2 \exp \biggl( -  \frac{m (3^{-m}\|f\|_{\underline{H}^{-1}(U_m)})^2}{C(U_0,d) \|f\|^2_{L^\infty(U_m)}}\biggr)\,.  
\end{multline}
\end{lemma}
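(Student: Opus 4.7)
The strategy is to apply Bernstein's inequality to the empirical sum $\avsum_{x \in \eta_*(U_m)} u_0(x) f(x)$, in direct analogy with the proof of Lemma~\ref{l.montecarlo}. First, I would use \cite[Proposition 2.2]{AV2} together with standard Poisson concentration to pass, on an event of probability at least $1 - C\exp(-c \, 3^{md})$, to the regime in which $\eta_*(U_m)$ consists of $n \sim \alpha |U_m| \sim 3^{md}$ points that, to leading order, behave as i.i.d.\ uniform samples in $U_m$; conditioning on membership in the infinite cluster is a lower-order perturbation that does not affect the subsequent estimate.

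The core step is controlling the variance of $g(Y) := u_0(Y) f(Y)$ with $Y$ uniform on $U_m$. I would use the crude bound
\[
\sigma^2 \;\leqslant\; \fint_{U_m} g(y)^2 \, dy \;\leqslant\; \|f\|_{L^\infty(U_m)}^2 \fint_{U_m} u_0(y)^2 \, dy,
\]
followed by the Poincar\'{e} inequality (using $u_0 \in H^1_0(U_m)$ and $\diam(U_m) \sim 3^m$) together with the standard energy identity
\[
\fint_{U_m} \nabla u_0 \cdot \ahom \nabla u_0 \, dy \;=\; \fint_{U_m} f u_0 \, dy,
\]
which identifies $\|\nabla u_0\|_{\underline{L}^2(U_m)}$ with $\|f\|_{\underline{H}^{-1}(U_m)}$ up to constants. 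Combining these ingredients yields
\[
\sigma^2 \;\leqslant\; C \|f\|_{L^\infty(U_m)}^2 \cdot 3^{2m} \|f\|_{\underline{H}^{-1}(U_m)}^2.
\]

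Bernstein's inequality for centered bounded random variables then gives
\[
\P \biggl[ \biggl| \avsum_{x \in \eta_*(U_m)} g(x) - \fint_{U_m} g \biggr| > t \biggr] \;\leqslant\; 2\exp \biggl( -\frac{c\, n \, t^2}{\sigma^2 + \|g\|_{L^\infty(U_m)} \, t} \biggr).
\]
Inserting $n \sim 3^{md}$ and the variance estimate above, and taking $t = \|f\|_{\underline{H}^{-1}(U_m)}$ when $d \geqslant 3$ (respectively $t = \sqrt{m}\,\|f\|_{\underline{H}^{-1}(U_m)}$ when $d = 2$), the sub-Gaussian term in the denominator dominates and the exponent simplifies to a multiple of $3^{m(d-2)}/\|f\|_{L^\infty(U_m)}^2$ (respectively $m/\|f\|_{L^\infty(U_m)}^2$); from these, the two claimed bounds follow after absorbing factors into the constant $C(U_0,d)$.

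The main obstacle I anticipate is verifying that the Bernstein large-deviation correction $\|g\|_{L^\infty(U_m)} \, t$ is genuinely dominated by $\sigma^2$ on the relevant range of $t$. This requires an $L^\infty$ estimate on $u_0$, which is available via a standard Moser-type iteration on \eqref{e.PDEhom} (of the same flavor as Lemma~\ref{l.moseriteration}). The extra $\sqrt{m}$ appearing in the $d = 2$ deviation threshold is not slack: because $n \sim 3^{2m}$ and the Poincar\'{e} factor in $\sigma^2$ also contributes $3^{2m}$, the sub-Gaussian exponent saturates at $O(m)$, which is precisely the logarithmic defect one encounters throughout two-dimensional homogenization (cf.\ the corresponding $\sqrt{m}$ in the corrector bounds of \cite[Theorem 1.2]{AV2}).
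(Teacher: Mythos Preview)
Your proposal is correct and follows the same route as the paper, which after rescaling to~$U_0$ simply invokes the Bernstein argument of Lemma~\ref{l.montecarlo}; your treatment of the variance via Poincar\'e and the energy identity merely fills in the details the paper leaves implicit. In fact your exponent~$c\,3^{m(d-2)}/\|f\|_{L^\infty}^2$ is slightly sharper than the one stated (which carries the extra factor~$(3^{-m}\|f\|_{\underline{H}^{-1}})^2 \leqslant C$), so it certainly implies the claimed bound.
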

\begin{proof}
	We let~$F \in L^2(U_0) $ with~$\|F\|_{\underline{L}^2(U_0)} = 1$ be defined by 
	\begin{equation*}
		F(y) = f(3^m y)\,.
	\end{equation*}
Let~$v_0(y) := 3^{-2m} u_0(3^m y);$ then a simple  change of variables implies that we must estimate 
\begin{equation} \label{e.goal}
	\begin{aligned}
&	\biggl| \avsum_{x \in \eta_*(U_m) } u_0(x) f(x) - 3^{2m} \fint_{U_0} v_0(y) F(y)\,dy  \biggr|\\
	&\quad  = 3^{2m}\biggl| \avsum_{y \in 3^{-m}\eta_*(U_m)} v_0(y) F(y) - \fint_{U_0} v_0(y) F(y)\,dy \biggr|\,.  
	\end{aligned}
\end{equation} 
The proof of the desired estimate follows then the same argument as in Lemma~\ref{l.montecarlo} above. 
\end{proof}

\bibliographystyle{alpha}
\bibliography{ref.bib}
\end{document}